\newlength\lineindent
\DeclareMathSymbol{\shortminus}{\mathbin}{AMSa}{"39}
\providecommand{\tria}{\mathcal{T}}
\providecommand{\Wdiv}{W_q(\textup{div},\Omega)}
\providecommand{\WdivTwo}{W_2(\textup{div},\Omega)}
\providecommand{\Wdivf}{W_q(\textup{div}{=}{\shortminus}f,\Omega)}
\providecommand{\WdivTwof}{W_2(\textup{div}{=}{\shortminus}f,\Omega)}
\providecommand{\dx}{\, \mathrm{d}x}
\providecommand{\snorm}[1]{{\left\vert\kern-0.25ex\left\vert\kern-0.25ex\left\vert #1   \right\vert\kern-0.25ex\right\vert\kern-0.25ex\right\vert}}
\pgfplotsset{
  width=.65\linewidth,
  axis background/.style={fill=black!5!white},
  grid style={densely dotted,semithick},
  legend style={
    legend columns=1,
    legend pos=outer north east
  },
  compat=newest % compatibility for old pgfplots versions
}
\begin{document}
\lstset{language=Python,
basicstyle=\small, % alle listings winzig drucken (meine Standardeinstellung)
keywordstyle=\color{black}\bfseries, % Schlüsselwörter fett und schwarz drucken
commentstyle=\color{blue}, % Kommentare blau drucken
stringstyle=\ttfamily, % Strings im Code Schreibmaschinenähnlich - setzt sich etwas vom Code ab
showstringspaces=false,
numbers=left, % Nummerierung der Linien links vom Code
numberstyle=\small, % Art der Nummerierung, hier z. B.: winzige Schrift
%stepnumber=5, % Intervall der Nummern, hier z.B. 5
              % falls nicht gesetzt: Alle Nummern werden ausgegeben.
numbersep=10 pt,
xleftmargin= 27pt,
%xrightmargin= 10pt
}

\author[A.~Kh.~Balci]{Anna Kh.~Balci}
\author[L.~Diening]{Lars Diening}
\author[J.~Storn]{Johannes Storn}
\address[A.~Balci, L.~Diening, J.~Storn]{Department of Mathematics, University of Bielefeld, Postfach 10 01 31, 33501 Bielefeld, Germany}
\email{akhripun@math.uni-bielefeld.de}
\email{lars.diening@uni-bielefeld.de}
\email{jstorn@math.uni-bielefeld.de}

\keywords{$p$-Laplacian, Kacanov iteration, adaptive FEM, energy relaxation}
\subjclass[2020]{
 	35J70, %Degenerate elliptic equations
 	65N22, %Numerical solution of discretized equations for boundary value problems involving PDEs
 	%65L60, %Finite element, Rayleigh-Ritz, Galerkin and collocation methods for ordinary differential equations
	65N30%Finite element, Rayleigh-Ritz and Galerkin methods for boundary value problems involving PDEs 
}

\thanks{The work of the authors was supported by the Deutsche Forschungsgemeinschaft (DFG, German Research Foundation) – SFB 1283/2 2021 – 317210226.}

\title{Relaxed Ka\v{c}anov scheme for the $p$-Laplacian with large $p$}

\begin{abstract}
We introduce a globally convergent relaxed \Kacanov{} scheme for the computation of the discrete minimizer to the $p$-Laplace problem with $2 \leq p < \infty$. 
The iterative scheme is easy to implement since each iterate results only  from the solve of a weighted, linear Poisson problem. It neither requires an additional line search nor involves unknown constants for the step length. The rate of convergence is independent of the underlying mesh. 
%Our analysis covers the effect of a regularization and
%We suggest an adaptive scheme that takes into account the regularization error, the error in the \Kacanov{} iterations, and the discretization error. %Numerical experiments involving large values of $p$ like $p=100$ underline the beneficial properties of the novel (adaptive) scheme.
\end{abstract}
\maketitle
\section{Introduction}
The $p$-Laplace problem is a prototype of many non-linear problems occurring in simulations of non-Newtonian fluids, turbulent flows of gases, glaciology, and plastic modeling. Given a right-hand side $f \in L^q(\Omega)$ with $1/p + 1/q = 1$ and $p\in (1,\infty)$, the $p$-Laplace problem seeks the unique minimizer
\begin{align}\label{eq:PrimalProblemIntro}
u = \argmin_{v \in W_0^{1,p}(\Omega)}\, \mathcal{J}( v ) \qquad\text{with }\mathcal{J}(v) \coloneqq
 \frac{1}{p}\int_\Omega |\nabla v|^p \,\mathrm{d}x - \int_\Omega f v\,\mathrm{d}x.
\end{align}
The functions may be scalar or vector-valued, but for better readability we use the notation of the scalar-valued case.  One difficulty that arises in its numerical approximation is the computation of the discretized minimizer.  In fact, established iterative schemes like Newton or gradient descent methods experience huge instabilities or even fail for values of $p$ that are not close to two, see Section~\ref{subsec:Exp3}.  A numerical scheme that overcomes these difficulties for small values of $1< p \leq 2$ is the relaxed \Kacanov{} scheme in \cite{DieningFornasierWank17} (see also \cite{BartelsDieningNochetto18} for an alternative approach using the $p$-Laplace gradient flow covering the range $1\leq p < 2$). However, this scheme does not converge for $p > 3$, see \cite[Rem.~21]{DieningFornasierWank17}. We modify this approach and suggest a novel algorithm to compute the discrete minimizer of the $p$-Laplacian for large values of $p \geq 2$, e.g.\ $p=100$ in Section~\ref{subsec:Exp2}.  The resulting iterative scheme
\begin{itemize}
\item converges globally for all $p\geq 2$,
\item is cheap and easy to implement, since each iteration computes solely the Galerkin approximation to a weighted Poisson model problem and avoids any additional line search,
\item includes a regularization that ensures well-posedness of the iterative computations,
\item is robust with respect to (adaptive) mesh refinements,
\item allows for a fully adaptive scheme including adaptive mesh refinements and adaptions of the regularization parameters,
\item applies to the scalar and vector-valued case.
\end{itemize}
These advantages make our \Kacanov{} scheme very attractive for the computation of $p$-Laplace problems with large values of $p$ as for example needed in \cite{BouchitteButtazzoDePascale03,BarrettPrigozhin00}.

We design the \Kacanov{} scheme as follows.
Section~\ref{sec:RelaxedDualFunctional} introduces the dual problem seeking the minimizer $\sigma \in \Wdivf \coloneqq \lbrace \tau \in L^q(\Omega;\mathbb{R}^d)\mid \textup{div}\, \tau = -f\rbrace$ with
\begin{align}\label{eq:dualProblemIntro}
\sigma = \argmin_{\tau \in \Wdivf} \mathcal{J}^*(\tau)\qquad\text{where }\mathcal{J}^*(\tau) \coloneqq \frac{1}{q}\,\int_\Omega |\tau|^q\,\mathrm{d}x.
\end{align}
%This problem is equivalent to \eqref{eq:PrimalProblemIntro} in the sense that $\sigma = |\nabla u|^{p-2}\nabla u$ and $\mathcal{J}(u) = -\mathcal{J}^*(\sigma)$.
The dual problem allows for a similar relaxation of the energy functional as in \cite{DieningFornasierWank17}.
% but the beneficial properties of the resulting relaxed energy are valid for $1< q \leq 2$ and so $2\leq p < \infty$. 
We prove the convergence of the relaxed dual functional with respect in the relaxation parameter in Section~\ref{sec:conInRelaxPara}, provided the solution has the additional smoothness $\sigma \in L^2(\Omega;\mathbb{R}^d)$ (see Theorem~\ref{thm:maxRegularity} for a discussion on that regularity assumption). We show the convergence of the \Kacanov{} iterations for fixed relaxation parameters in Section~\ref{sec: Kacanov} and combine the two convergence results to conclude an algebraic rate of convergence towards the exact discrete minimizer in Section~\ref{sec:AlgebraicRate}.
Section~\ref{sec:Discretization} introduces a discretization of the \Kacanov{} scheme that leads due to a duality relation on the discrete level to a numerical scheme that computes in each iterate solely the Galerkin approximation to a weighted Poisson model problem with lowest-order Lagrange elements. Section~\ref{sec:adaptScheme} suggests an adaptive scheme that estimates the regularization error, the error in the \Kacanov{} iterations, and the discretization error. Depending on these estimates it either causes an adaption of the regularization parameter, computes a further \Kacanov{} iteration, or applies an adaptive mesh refinement. The numerical experiments in Section~\ref{subsec:Exp1} and~\ref{subsec:Exp2} indicate even for large values $p=100$ an exponential rate of convergence for that adaptive approach. The numerical experiment in Section~\ref{subsec:Exp3} compares our scheme with the steepest descent method from \cite{HuangLiLiu07}. It illustrates that our scheme does, unlike the steepest descent method, not depend on the underlying triangulation and is thus superior on fine meshes.

\section{Relaxed dual functional}\label{sec:RelaxedDualFunctional}
The equivalence of the primal problem in \eqref{eq:PrimalProblemIntro} and its dual formulation in \eqref{eq:dualProblemIntro} is a classical result in the calculus of variations, see for example \cite{EkelandTemam76}. It follows for a wide class of convex functionals by properties of the convex conjugate. 
%A more direct approach utilizes the Euler--Lagrange equations. In particular, the minimizer in \eqref{eq:PrimalProblemIntro} solves the variational problem
The Euler--Lagrange equation of \eqref{eq:dualProblemIntro} involves the spaces
\begin{align}\label{eq:def_spaces}
\begin{aligned}
\Wdiv & \coloneqq  \lbrace \tau \in L^q(\Omega;\mathbb{R}^d) \mid \divergence \tau \in L^q(\Omega)\rbrace,\\
%\WdivZero & \coloneqq \lbrace \tau \in \Wdiv \mid \divergence \tau = 0\rbrace,\\
%\Wdivf& \coloneqq \lbrace \tau \in \Wdiv \mid \divergence \tau = -f\rbrace.
\end{aligned}
\end{align}
The minimizer $\sigma \in \Wdiv$ in \eqref{eq:dualProblemIntro} solves, with a Lagrange multiplier $u\in L^p(\Omega)$ the saddle point problem
\begin{align}\label{eq:dualProbVar}
\begin{aligned}
\int_\Omega |\sigma|^{q-2} \sigma \cdot \xi \,\mathrm{d}x + \int_\Omega u\,\textup{div}\,\xi\,\mathrm{d}x & = 0 &&\text{for all }\xi \in \Wdiv,\\
\int_\Omega v\,\textup{div}\, \sigma \,\mathrm{d}x &= - \int_\Omega f v \,\mathrm{d}x&&\text{for all }v\in L^p(\Omega). 
\end{aligned}
\end{align}
The Lagrange multiplier $u\in L^p(\Omega)$ equals the minimizer in \eqref{eq:PrimalProblemIntro} and satisfies $\sigma = |\nabla u|^{p-2}\nabla u$ and $\nabla u = |\sigma|^{q-2} \sigma$. Moreover, we have the identity
\begin{align}\label{eq:dualityRelation1}
-\mathcal{J}^*(\sigma) = \mathcal{J}(u).
\end{align}

The problem in \eqref{eq:dualProbVar} has similarities to the mixed formulation of a weighted Poisson model problem and so motivates an iterative calculation of functions $\sigma_{n+1} \in \Wdiv$ and $u_{n+1}\in  L^p(\Omega)$ by solving, for all $\xi \in \Wdiv$ and $v \in L^p(\Omega)$,
\begin{align}\label{eq:UnrelaxedKac}
\begin{aligned}
\int_\Omega |\sigma_n|^{q-2} \sigma_{n+1} \cdot \xi \,\mathrm{d}x + \int_\Omega u_{n+1}\,\textup{div}\,\xi\,\mathrm{d}x & = 0,\\
\int_\Omega v\,\textup{div}\, \sigma_{n+1} \,\mathrm{d}x &= - \int_\Omega f v \,\mathrm{d}x. 
\end{aligned}
\end{align}
This approach (known as \Kacanov\ iteration, Picard iteration, or method of successive substitutions) has already been suggested in \cite{CreuseFarhloulPaquet07}. Unfortunately, the problem in \eqref{eq:UnrelaxedKac} degenerates at points where $|\sigma| = 0$ and $|\sigma| = \infty$. We remedy this difficulty with an idea from \cite{DieningFornasierWank17}. This idea bases on a modified energy functional which reads for all $a\colon \Omega \to [0,\infty)$ and  $\tau \in \Wdiv$
\begin{align}\label{eq:relaxedFunctional}
\mathcal{J}^*(\tau,a) \coloneqq \int_\Omega \frac{1}{2} a^{q-2}|\tau|^2 + \left(\frac{1}{q} - \frac{1}{2}\right)a^q\,\mathrm{d}x. 
\end{align} 
Notice that for $a = |\tau|$ the energy equals the one in \eqref{eq:dualProblemIntro}. 
Moreover, the relaxed energy is convex with respect to $\tau$ and $a$, which we show as follows.
We set the function $\beta(t,a) \coloneqq a^{q-2}t^2/2$ for all $a \geq 0 $ and $t\in \mathbb{R}$. The function satisfies
\begin{align*}
(\nabla^2\beta(t,a)) = \begin{pmatrix}
a^{q-2}&(q-2)a^{q-3}t\\
(q-2)a^{q-3}t & (q-2)(q-3)a^{q-4}t^2/2
\end{pmatrix}.
\end{align*}
Since $0\leq a^{q-2}$ and $0\leq \det((\nabla^2\beta)(t,a) = a^{2q-6}t^2(2-q)(q-1)$ for all $a \geq 0$ and $t\in \mathbb{R}$, this matrix is non-negative definite for all $1<q<2$ and so the relaxed energy $\mathcal{J}^*$ is convex in $\tau$ and $a$.
\begin{remark}[Opposite case]\label{rem:OppositeCase}
The restriction $1<q<2$ is equivalent to $2<p<\infty$. Therefore, this approach covers the range of $p$ that is excluded in \cite{DieningFornasierWank17}.
\end{remark}
Given $a\colon \Omega \to [0,\infty)$, the Euler-Lagrange equation corresponding to the minimization of \eqref{eq:relaxedFunctional} over $\Wdivf$ seeks $\sigma \in \Wdiv$ and $u \in L^p(\Omega)$ with 
\begin{align*}
a^{q-2} \sigma - \nabla u = 0\qquad\text{and} \qquad -\textup{div}\,\sigma = f.
\end{align*}
This saddle point system degenerates as $\essinf a \to 0$ and $\esssup a \to \infty$. We overcome this difficulty by restricting the minimization of \eqref{eq:relaxedFunctional} for fixed $\tau\in \Wdiv$ to functions $a:\Omega \to [\varepsilon_-,\varepsilon_+]$ within a relaxation interval $\varepsilon = [\varepsilon_-,\varepsilon_+] \subset (0,\infty)$. Differentiation shows that the minimizer for a fixed $\tau \in \Wdiv$ reads 
\begin{align}\label{eq:minimalA}
\argmin_{a\colon \Omega \to [\varepsilon_-,\varepsilon_+]} \mathcal{J}^*(\tau,a) = \varepsilon_- \vee |\tau| \wedge \varepsilon_+.
\end{align}
This leads for all $\tau \in \Wdiv$ to the relaxed energy
\begin{align*}
\mathcal{J}^*_\varepsilon(\tau) \coloneqq \mathcal{J}^*(\tau,\varepsilon_- \vee |\tau| \wedge \varepsilon_+) = \min_{a:\Omega\to[\varepsilon_-,\varepsilon_+]} \mathcal{J}^*(\tau,a).
\end{align*}
This monotonically decreasing functional (with respect to growing intervals $\varepsilon = [\varepsilon_-,\varepsilon_+]$) hides the minimization with respect to $a:\Omega \to [\varepsilon_-,\varepsilon_+]$. The functional can be written in terms of the integrand
\begin{align}\label{eq:defKappa}
\begin{aligned}
\kappa^*_\varepsilon(t) \coloneqq \begin{cases}
\frac{1}{2}\varepsilon_-^{q-2}t^2 + \left(\frac{1}{q}-\frac{1}{2}\right)\varepsilon_-^q&\text{for }t\leq \varepsilon_-,\\
\frac{1}{q}t^q&\text{for } \varepsilon_-\leq t\leq \varepsilon_+,\\
\frac{1}{2}\varepsilon_+^{q-2}t^2+\left(\frac{1}{q}-\frac{1}{2}\right)\varepsilon_+^q&\text{for }\varepsilon_+ \leq t.
\end{cases}
\end{aligned}
\end{align}
More precisely, the functional equals
\begin{align*}
\mathcal{J}_\varepsilon^*(\tau) = \int_\Omega \kappa^*_\varepsilon(|\tau|)\,\mathrm{d}x\qquad\text{for all }\tau \in \Wdiv.
\end{align*}
The function $\kappa^*_\varepsilon$ has quadratic growth in the sense that $\kappa^*_\varepsilon(t) \simeq \varepsilon_+^{q-2} t^2$  for $t \geq \epsilon_+$. Therefore, the relaxed energy $\mathcal{J}^*_\varepsilon(v) < \infty$ with $\epsilon_+<\infty$ is bounded if and only if $\tau \in \WdivTwo$. 
This shows that there exists a function in $\Wdivf$ with finite energy if and only if the right-hand side $f$ allows for the existence of a function $\tau \in \WdivTwo$ with $-\textup{div}\,\tau = f$.
Since the minimizer $\sigma$ in  \eqref{eq:dualProblemIntro} satisfies $-\textup{div}\, \sigma = f$, sufficient conditions are the following.
\begin{theorem}[Maximal regularity]\label{thm:maxRegularity} Let $\sigma$ be the minimizer in \eqref{eq:dualProblemIntro} and  let $\Omega \subset \RRd$ be a bounded domain.
\begin{enumerate}
\item If $\Omega$ has a $C^{1,\alpha}$-boundary $\partial \Omega$ for some $\alpha \in (0,1]$, we have for any $r\geq q$ \label{eq:itm1asd}
\begin{align*}
\lVert \sigma \rVert_{L^r(\Omega)} \lesssim \lVert f \rVert_{W^{-1,r}(\Omega)}.
\end{align*}
\item If $\Omega$ is a convex open set with $d\geq 2$, we have\label{eq:itm2asd}
\begin{align*}
\lVert \sigma \rVert_{L^\frac{2d}{d-2}(\Omega)} \lesssim \lVert \nabla \sigma \rVert_{L^2(\Omega)} \lesssim \lVert f \rVert_{L^2(\Omega)}.
\end{align*}
\end{enumerate}
\end{theorem}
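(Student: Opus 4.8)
The plan is to treat the two items separately and, within item~(1), to split off the base case $r=q$, which is purely variational, from the range $r>q$, which requires genuine regularity theory. For $r=q$ I would argue from the Euler--Lagrange system~\eqref{eq:dualProbVar}: testing its first line with $\xi=\sigma$ and its second line with $v=u$ gives $\int_\Omega|\sigma|^q\dx=\langle f,u\rangle$, the duality pairing of $W^{-1,q}(\Omega)$ with $W_0^{1,p}(\Omega)$. Since $\sigma=|\nabla u|^{p-2}\nabla u$ forces $|\sigma|^q=|\nabla u|^p$, this identity reads $\lVert\sigma\rVert_{L^q}^q=\lVert\nabla u\rVert_{L^p}^p$; bounding the pairing by $\lVert f\rVert_{W^{-1,q}}\lVert\nabla u\rVert_{L^p}$ (with Poincar\'e for $\lVert u\rVert_{W_0^{1,p}}\simeq\lVert\nabla u\rVert_{L^p}$) and using $\lVert\sigma\rVert_{L^q}=\lVert\nabla u\rVert_{L^p}^{p-1}$ yields $\lVert\sigma\rVert_{L^q}\lesssim\lVert f\rVert_{W^{-1,q}}$. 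This settles $r=q$ and, since $L^2(\Omega)\hookrightarrow W^{-1,q}(\Omega)$ for $q\le 2$, also gives $\lVert\sigma\rVert_{L^q}\lesssim\lVert f\rVert_{L^2}$.

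For $r>q$ the assertion is a nonlinear Calder\'on--Zygmund statement: the solution map $f\mapsto\sigma=|\nabla u|^{p-2}\nabla u$ sends $W^{-1,r}(\Omega)$ into $L^r(\Omega)$ with the stated bound. I would derive it from the global higher-integrability theory for the $p$-Laplacian on $C^{1,\alpha}$ domains rather than reprove it. The interior part rests on the self-improving (reverse H\"older / Gehring) mechanism, whereas the full statement up to $\partial\Omega$ is exactly where the $C^{1,\alpha}$ hypothesis enters, via boundary flattening and a comparison/perturbation argument; this up-to-the-boundary estimate is the main obstacle for item~(1).

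For item~(2) the second inequality is the crux and the first is soft. The embedding $\lVert\sigma\rVert_{L^{2d/(d-2)}}\lesssim\lVert\sigma\rVert_{W^{1,2}}$ is the Sobolev inequality (for $d\ge 3$), and since the $L^2$-part of the $W^{1,2}$-norm will itself be controlled by $\lVert f\rVert_{L^2}$, the genuine content is $\lVert\nabla\sigma\rVert_{L^2}\lesssim\lVert f\rVert_{L^2}$, i.e.\ second-order regularity of the nonlinear stress $\sigma$. The plan is to differentiate $-\textup{div}\,\sigma=f$ and test with second derivatives of $u$: for smooth solutions a Rellich--Ne\v{c}as integration by parts produces an a priori bound $\int_\Omega|\nabla\sigma|^2\dx\lesssim\int_\Omega f^2\dx+B_{\partial\Omega}$, where the boundary term $B_{\partial\Omega}$ carries the second fundamental form of $\partial\Omega$.

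The decisive point---and the reason convexity is imposed---is the sign of $B_{\partial\Omega}$: on a convex domain the curvature contribution is nonnegative and may be discarded, leaving $\lVert\nabla\sigma\rVert_{L^2}\lesssim\lVert f\rVert_{L^2}$, after which a convex-exhaustion or regularisation argument removes the smoothness assumed in the a priori step. I expect this boundary-term analysis to be the hardest part of the whole statement, and I would invoke the established second-order regularity theory for the $p$-Laplacian on convex domains rather than carry out the integration by parts in full.
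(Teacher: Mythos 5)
Your proposal matches the paper's proof in substance: the paper likewise delegates item~(1) to the global nonlinear Calder\'on--Zygmund theory on $C^{1,\alpha}$ domains (Kinnunen--Zhou; Breit et al.\ for systems) and item~(2) to the second-order regularity of the stress on convex domains (Cianchi--Maz'ya and its vectorial extensions), exactly the results whose mechanisms you describe. Your self-contained variational argument for the endpoint $r=q$ is correct but is not separated out in the paper, which covers all $r\geq q$ by the same citation.
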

\begin{proof}
  The statement in \ref{eq:itm1asd} is shown for equations in \cite[Thm.~1.6]{KinnunenZhou01} and for systems in \cite[Thm.~4.1]{BreCiaDieKuuSch18}.
  The first estimate in \ref{eq:itm2asd} follows from the Sobolev embedding theorem, the second is shown in \cite[Thm.~2.3]{CianchiMazya18} for equations and in \cite{CiaMaz19systems,BalciCianchiDieningMazya22} for systems.
\end{proof}
If the set $\WdivTwof$ is not empty, the direct method in the calculus of variations leads to the existence of a unique minimizer 
\begin{align}\label{eq:regMinimizer}
\sigma_\varepsilon = \argmin_{\tau \in \Wdivf} \mathcal{J}_\varepsilon^*(\tau).
\end{align}
%In addition, we have $\lim_{\varepsilon \to [0,\infty]} \mathcal{J}^*_\varepsilon(\tau) = \mathcal{J}^*(\tau)$ for all $\tau \in \WdivTwo$ and $\mathcal{J}^*(\tau) \leq \liminf_{\varepsilon \to [0,\infty]} \mathcal{J}_\varepsilon^*(\tau)$ for all $\tau \in \Wdiv$.
\section{Convergence in the relaxation parameter}\label{sec:conInRelaxPara}
This section shows that the minimizer $\sigma_\varepsilon\in \WdivTwof$ of the relaxed energy $\mathcal{J}^*_\varepsilon$ converges to the minimizer $\sigma
\in \WdivTwof$ of $\mathcal{J}^*$ as the interval $\varepsilon \to (0,\infty)$. In particular, we derive an upper bound for the relaxation error $\sigma_\varepsilon - \sigma$.
\begin{theorem}[Convergence in $\varepsilon$]\label{thm:ConvInRelaxPara}
Suppose that the minimizers $\sigma_\varepsilon \in \Wdivf$ with $\varepsilon = [\varepsilon_-,\varepsilon_+] \subset (0,\infty)$ exist and that $\sigma \in L^r(\Omega;\mathbb{R}^d)$ for some $r\geq 2$. 
Then the energy difference is bounded by
\begin{align}\label{eq:ConvInRelaxPara}
\begin{aligned}
\mathcal{J}^*(\sigma_\varepsilon) - \mathcal{J}^*(\sigma)&\leq \mathcal{J}_\varepsilon^*(\sigma_\varepsilon) - \mathcal{J}^*(\sigma) \leq  \frac{|\Omega|}{q}\varepsilon_-^q + \frac{1}{q}\varepsilon_+^{-(r-q)} \lVert \sigma \rVert^r_{L^r(\Omega)}.
\end{aligned}
\end{align}
\end{theorem}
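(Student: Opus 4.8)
The plan is to combine the two minimality properties of $\sigma$ and $\sigma_\varepsilon$ with a pointwise comparison of the integrands $\kappa^*_\varepsilon(t)$ and $t^q/q$, splitting $\Omega$ according to whether $|\sigma|$ lies below $\varepsilon_-$, inside $[\varepsilon_-,\varepsilon_+]$, or above $\varepsilon_+$.

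The left inequality reduces to $\mathcal{J}^*(\sigma_\varepsilon) \leq \mathcal{J}^*_\varepsilon(\sigma_\varepsilon)$, which in turn follows from the pointwise bound $\tfrac1q t^q \leq \kappa^*_\varepsilon(t)$ for all $t \geq 0$. Indeed, $\kappa^*_\varepsilon(t)$ is the minimum over $a \in [\varepsilon_-, \varepsilon_+]$ of the integrand $\tfrac12 a^{q-2}t^2 + (\tfrac1q - \tfrac12)a^q$, while the computation leading to \eqref{eq:minimalA} shows that the unconstrained minimum over $a \geq 0$ equals $\tfrac1q t^q$ (attained at $a = t$); constraining $a$ to $[\varepsilon_-,\varepsilon_+]$ can only increase this value.

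For the right inequality I would use that $\sigma \in \Wdivf$ and that $\sigma_\varepsilon$ minimizes $\mathcal{J}^*_\varepsilon$ over $\Wdivf$, so that $\mathcal{J}^*_\varepsilon(\sigma_\varepsilon) \leq \mathcal{J}^*_\varepsilon(\sigma)$ and hence
\[
\mathcal{J}^*_\varepsilon(\sigma_\varepsilon) - \mathcal{J}^*(\sigma) \leq \int_\Omega \Big(\kappa^*_\varepsilon(|\sigma|) - \tfrac1q |\sigma|^q\Big)\dx .
\]
Now I bound the integrand pointwise. On $\{\varepsilon_- \leq |\sigma| \leq \varepsilon_+\}$ it vanishes. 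On $\{|\sigma| \leq \varepsilon_-\}$, using $t^2 \leq \varepsilon_-^2$ in the lower quadratic branch gives $\kappa^*_\varepsilon(t) \leq \tfrac12\varepsilon_-^q + (\tfrac1q - \tfrac12)\varepsilon_-^q = \tfrac1q\varepsilon_-^q$, and dropping the non-negative term $\tfrac1q t^q$ bounds the integrand by $\tfrac1q\varepsilon_-^q$; integrating over this set contributes at most $\tfrac{|\Omega|}{q}\varepsilon_-^q$.

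The main work is the set $\{|\sigma| \geq \varepsilon_+\}$, where the factor $\varepsilon_+^{-(r-q)}$ must appear. Here I use $t \geq \varepsilon_+$ together with $r \geq 2$: since $s \mapsto s^{2-r}$ is non-increasing, $t^2 = t^{2-r}t^r \leq \varepsilon_+^{2-r}t^r$, whence $\tfrac12\varepsilon_+^{q-2}t^2 \leq \tfrac12\varepsilon_+^{-(r-q)}t^r$; similarly $\varepsilon_+^q \leq \varepsilon_+^{q-r}t^r$, whence $(\tfrac1q - \tfrac12)\varepsilon_+^q \leq (\tfrac1q - \tfrac12)\varepsilon_+^{-(r-q)}t^r$. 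Dropping $-\tfrac1q t^q \leq 0$ and adding the two contributions, the constants collapse to $\tfrac12 + (\tfrac1q - \tfrac12) = \tfrac1q$, so $\kappa^*_\varepsilon(t) - \tfrac1q t^q \leq \tfrac1q\varepsilon_+^{-(r-q)}t^r$ on this set. Integrating and using $\int_{\{|\sigma| \geq \varepsilon_+\}}|\sigma|^r\dx \leq \|\sigma\|_{L^r(\Omega)}^r$ produces the second summand, and summing the three regional contributions yields the claim. The only delicate point is the bookkeeping of the exponents of $\varepsilon_+$ and checking that the constants collapse to $1/q$; the assumption $\sigma \in L^r$ enters precisely to make that last integral finite and to license $t^{2-r} \leq \varepsilon_+^{2-r}$.
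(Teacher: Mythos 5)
Your proposal is correct and follows essentially the same route as the paper: both inequalities come from the monotonicity of the relaxation in the interval and the minimality of $\sigma_\varepsilon$, followed by the same three-region pointwise comparison of $\kappa^*_\varepsilon(|\sigma|)$ with $\tfrac1q|\sigma|^q$. The only cosmetic difference is on $\{|\sigma|>\varepsilon_+\}$, where the paper first bounds the integrand by $\varepsilon_+^{q-2}|\sigma|^2/q$ and then applies a H\"older-type estimate to $\int_{\{\varepsilon_+<|\sigma|\}}|\sigma|^2\,\mathrm{d}x$, while you absorb the same inequality $|\sigma|^{2-r}\leq\varepsilon_+^{2-r}$ directly into the pointwise bound.
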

\begin{proof}
The first inequality in \eqref{eq:ConvInRelaxPara} follows by the monotonicity of $\mathcal{J}_\varepsilon$ with respect to the relaxation parameter.
The minimizing property of $\sigma_\varepsilon \in \WdivTwof$ implies
\begin{align}\label{eq:ProofConv1}
\begin{aligned}
\mathcal{J}_\varepsilon^*(\sigma_\varepsilon) - \mathcal{J}^*(\sigma) & \leq \mathcal{J}_\varepsilon^*(\sigma) - \mathcal{J}^*(\sigma) = \int_\Omega \left(\kappa_\varepsilon(|\sigma|) - \frac{1}{q} |\sigma|^q\right)\mathrm{d}x.
\end{aligned}
\end{align}
It follows from the definition of $\kappa_\varepsilon$ in \eqref{eq:defKappa} that
\begin{align}\label{eq:ProofConv2}
\kappa_\varepsilon(|\sigma|) - \frac{1}{q}|\sigma|^q \leq \begin{cases}
\varepsilon_-^q/q&\text{in }\lbrace |\sigma| \leq \varepsilon_-\rbrace,\\
0&\text{in } \lbrace \varepsilon_-< |\sigma|\leq \varepsilon_+\rbrace,\\
\varepsilon_+^{q-2} |\sigma|^2/q&  \text{in } \lbrace \varepsilon_+ <  |\sigma| \rbrace.
\end{cases}
\end{align}
The inequalities in \eqref{eq:ProofConv1}--\eqref{eq:ProofConv2} yield
\begin{align}\label{eq:ProofConv3}
\begin{aligned}
\mathcal{J}^*_\varepsilon(\sigma_\varepsilon) - \mathcal{J}^*(\sigma)&\leq \frac{|\Omega|}{q}\varepsilon_-^q + \frac{\varepsilon_+^{q-2}}{q} \int_{\lbrace \varepsilon_+ < |\sigma|\rbrace} |\sigma|^2\,\mathrm{d}x.
\end{aligned}
\end{align}
H\"older's inequality bounds the second integral by
\begin{align}\label{eq:ProofConv4}
\begin{aligned}
&\int_{\lbrace \varepsilon_+ < |\sigma| \rbrace} |\sigma|^2\,\mathrm{d}x = \int_\Omega \mathds{1}_{\lbrace \varepsilon_+ < |\sigma| \rbrace} |\sigma|^2\,\mathrm{d}x\\
&\qquad \leq \varepsilon_+^{2-r} \lVert \varepsilon_+^{r-2} \mathds{1}_{\lbrace \varepsilon_+ < |\sigma| \rbrace} \rVert_{L^{r/(r-2)}(\Omega)} \lVert \sigma \rVert_{L^r(\Omega)}^2 \leq \varepsilon_+^{2-r} \lVert \sigma \rVert_{L^r(\Omega)}^r.
\end{aligned}
\end{align}
Combining \eqref{eq:ProofConv3}--\eqref{eq:ProofConv4} concludes the proof.
\end{proof}
Convergence of the energies yields convergence with respect to a natural distance. More precisely, we have the following. 
\begin{definition}[Quantities]\label{def:Quantites}
Let $P \in \mathbb{R}^d$ and $\varepsilon = [\varepsilon_-,\varepsilon_+] \subset [0,\infty]$. Recall that 
$(\phi^*_\varepsilon)'(|P|) = (\varepsilon_- \vee |P| \wedge \varepsilon_+)^{q-2}|P|$
and set 
\begin{align*}
A^*_\varepsilon(P) \coloneqq \begin{cases}
\frac{(\phi_\varepsilon^*)'(|P|)}{|P|}P&\text{for }P\neq 0,\\
0&\text{for }P = 0
\end{cases}\quad\text{and}\quad
V^*_\varepsilon(P) \coloneqq \begin{cases}
\sqrt{\frac{(\phi_\varepsilon^*)'(|P|)}{|P|}}P&\text{for }P\neq 0,\\
0&\text{for }P = 0.
\end{cases}
\end{align*} 
If $\varepsilon = [0,\infty]$, we write $A^* \coloneqq A^*_{[0,\infty]}$ and $V^* \coloneqq V^*_{[0,\infty]}$.
\end{definition}
\begin{lemma}[Equivalence]\label{lem:equivalence}
For all $P,Q\in \mathbb{R}^d$ holds the equivalence
\begin{align*}
(A^*_\varepsilon(P)-A^*_\varepsilon(Q))\cdot (P-Q) \simeq \frac{(\phi_\varepsilon^*)'(|P| \vee |Q|)}{|P| \vee |Q|} |P-Q|^2 \simeq |V^*_\varepsilon(P)-V^*_\varepsilon(Q)|^2.
\end{align*} 
The hidden constants depend solely on $q$ and are in particular independent of $\varepsilon$.
\end{lemma}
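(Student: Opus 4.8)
The plan is to reduce everything to the scalar weight $\omega(t):=(\varepsilon_-\vee t\wedge\varepsilon_+)^{q-2}=(\phi^*_\varepsilon)'(t)/t$, so that $A^*_\varepsilon(P)=\omega(|P|)\,P$, $V^*_\varepsilon(P)=\sqrt{\omega(|P|)}\,P$, and the middle quantity reads $\omega(|P|\vee|Q|)\,|P-Q|^2$. Since $q-2\in(-1,0)$, I would first record three structural facts, all with constants depending only on $q$: (i) $\omega$ is non-increasing; (ii) $t\mapsto t\,\omega(t)=(\phi^*_\varepsilon)'(t)$ is non-decreasing; and (iii) the pointwise comparison $(q-1)\,\omega(t)\le(\phi^*_\varepsilon)''(t)\le\omega(t)$ for a.e.\ $t$ (equality off $(\varepsilon_-,\varepsilon_+)$, and the factor $q-1$ inside, as one reads off from $\kappa^*_\varepsilon$). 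The decisive feature is that (iii) holds uniformly in $\varepsilon$, which is what ultimately produces the $\varepsilon$-independence of the hidden constants. I would also note that $\sqrt{\omega}$ inherits (i) and (ii), and that $(t\sqrt{\omega})'\in[\tfrac q2,1]\,\sqrt{\omega}$ a.e.

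For the two \emph{upper} estimates I would use a telescoping inequality that needs no calculus. For $F(P):=w(|P|)\,P$ with $w$ non-increasing and $t\,w(t)$ non-decreasing, assuming $|P|\ge|Q|$ and writing $F(P)-F(Q)=w(|P|)(P-Q)+(w(|P|)-w(|Q|))Q$ gives $|F(P)-F(Q)|\le w(|P|)\,|P-Q|+(w(|Q|)-w(|P|))\,|Q|$; the last term is at most $w(|P|)|P|-w(|P|)|Q|=w(|P|)(|P|-|Q|)\le w(|P|)|P-Q|$, using (ii) in the form $|Q|\,w(|Q|)\le|P|\,w(|P|)$. Hence $|F(P)-F(Q)|\le 2\,w(|P|\vee|Q|)\,|P-Q|$. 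Applied to $w=\sqrt{\omega}$ this yields $|V^*_\varepsilon(P)-V^*_\varepsilon(Q)|^2\le 4\,\omega(|P|\vee|Q|)\,|P-Q|^2$; applied to $w=\omega$, combined with Cauchy--Schwarz, it yields $(A^*_\varepsilon(P)-A^*_\varepsilon(Q))\cdot(P-Q)\le 2\,\omega(|P|\vee|Q|)\,|P-Q|^2$.

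For the two \emph{lower} estimates I would pass to an integral representation along the segment $P_t:=Q+t(P-Q)$. Since $A^*_\varepsilon=\nabla\Phi$ with $\Phi(P):=\phi^*_\varepsilon(|P|)$ locally Lipschitz, the fundamental theorem of calculus gives $(A^*_\varepsilon(P)-A^*_\varepsilon(Q))\cdot(P-Q)=\int_0^1(\nabla^2\Phi)(P_t)[P-Q,P-Q]\,\mathrm{d}t$. Diagonalising, $\nabla^2\Phi$ has tangential eigenvalue $\omega$ and radial eigenvalue $(\phi^*_\varepsilon)''$, both $\ge(q-1)\,\omega(|P_t|)$ by (iii); moreover $t\mapsto|P_t|$ is convex, so $|P_t|\le|P|\vee|Q|$ and (i) gives $\omega(|P_t|)\ge\omega(|P|\vee|Q|)$. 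This already yields the left term $\ge(q-1)\,\omega(|P|\vee|Q|)\,|P-Q|^2$. For the right term I would use the analogous representation $V^*_\varepsilon(P)-V^*_\varepsilon(Q)=M\,(P-Q)$ with $M:=\int_0^1(DV^*_\varepsilon)(P_t)\,\mathrm{d}t$: each $DV^*_\varepsilon(P_t)$ is symmetric positive definite with all eigenvalues $\ge\tfrac q2\sqrt{\omega(|P_t|)}\ge\tfrac q2\sqrt{\omega(|P|\vee|Q|)}$, and an average of such matrices again has all eigenvalues bounded below by $\tfrac q2\sqrt{\omega(|P|\vee|Q|)}$, hence cannot contract $P-Q$; squaring gives $|V^*_\varepsilon(P)-V^*_\varepsilon(Q)|^2\ge\tfrac{q^2}{4}\,\omega(|P|\vee|Q|)\,|P-Q|^2$. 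Combining the four one-sided bounds proves the stated equivalence.

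I expect the main obstacle to be twofold. First, one must extract the eigenvalue comparisons (iii) and $(t\sqrt{\omega})'\simeq\sqrt{\omega}$ with constants genuinely independent of the clamping interval $\varepsilon$; this is where the precise shape of $\kappa^*_\varepsilon$ and the range $1<q<2$ enter, and it is what makes the whole equivalence uniform. Second, the lower bounds are where a naive termwise expansion of $(A^*_\varepsilon(P)-A^*_\varepsilon(Q))\cdot(P-Q)$ runs into sign and cancellation problems for obtuse configurations of $P$ and $Q$; the device that avoids this is to keep the estimate in integrated (matrix) form and to exploit that $|P_t|$ never exceeds $|P|\vee|Q|$ together with the monotonicity of $\omega$, rather than estimating the individual terms. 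Some minor care is also needed because $\Phi$ and $V^*_\varepsilon$ are only piecewise smooth (kinks at $|P_t|=\varepsilon_\pm$ and a possible single zero of $P_t$), but both maps are Lipschitz, so the representations hold with a.e.-defined derivatives.
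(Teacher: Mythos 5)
Your proposal is correct, and it is worth noting that the paper itself gives no argument here: it only records that the equivalence ``follows from the uniform convexity of $\phi_\varepsilon^*$'' and cites \cite[Lem.~2]{DieningFornasierWank17} and \cite[Lem.~3]{DieningEttwein08}. What you have written is, in effect, a self-contained reconstruction of the content of those cited lemmas for the specific family $\phi^*_\varepsilon$. The decisive ingredient you isolate --- the two-sided bound $(q-1)\,\omega(t)\le(\phi^*_\varepsilon)''(t)\le\omega(t)$ with $\omega(t)=(\varepsilon_-\vee t\wedge\varepsilon_+)^{q-2}$, valid a.e.\ with constants depending only on $q$ --- is precisely the uniform-convexity/$\Delta_2$ hypothesis under which the cited general results apply with $\varepsilon$-independent constants, so you have correctly identified where the uniformity comes from. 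Your individual steps all check out: the monotonicity facts (i)--(ii) and the eigenvalue bounds for $\nabla^2\Phi$ and $DV^*_\varepsilon$ are verified directly from the three-piece formula for $\kappa^*_\varepsilon$ in the range $1<q<2$; the telescoping upper bound $|F(P)-F(Q)|\le 2\,w(|P|\vee|Q|)\,|P-Q|$ for a non-increasing weight $w$ with $t\,w(t)$ non-decreasing is a clean elementary device that avoids calculus entirely; and the lower bounds via the integral of the (symmetric, a.e.-defined) Jacobian along the segment, combined with $|P_t|\le|P|\vee|Q|$ and the monotonicity of $\omega$, correctly sidestep the cancellation issues of a termwise expansion (the concavity of $\lambda_{\min}$ on symmetric matrices justifies passing the eigenvalue bound through the average, and the piecewise-smoothness caveats you raise are harmless since $A^*_\varepsilon$ and $V^*_\varepsilon$ are locally Lipschitz). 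Compared with the paper's citation, your route buys a fully explicit, elementary proof with trackable constants ($q-1$, $q^2/4$, $2$, $4$), at the cost of redoing by hand what the shifted-N-function machinery of \cite{DieningEttwein08} delivers in greater generality.
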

\begin{proof}
This equivalence follows from the uniform convexity of $\phi_\varepsilon^*$, see \cite[Lem.\ 2]{DieningFornasierWank17} and \cite[Lem.~3]{DieningEttwein08} for more details.
\end{proof}
\begin{lemma}[Natural distance]\label{lem:Quasi-Norm}
The minimizer $\sigma_\varepsilon$ of $\mathcal{J}^*_\varepsilon$ over $\Wdivf$ satisfies, for all $\tau \in \Wdivf$ and $\varepsilon \subset [0,\infty]$,
\begin{align*}
\mathcal{J}_\varepsilon^*(\tau) - \mathcal{J}_\varepsilon^*(\sigma_\varepsilon) &\leq \int_\Omega (A^*_\varepsilon(\tau)-A^*_\varepsilon(\sigma_\varepsilon))\cdot (\tau - \sigma_\varepsilon)\,\mathrm{d}x\\
&\lesssim \int_\Omega |V^*_\varepsilon(\tau) - V^*_\varepsilon(\sigma_\varepsilon)|^2\,\mathrm{d}x\lesssim \mathcal{J}_\varepsilon^*(\tau) - \mathcal{J}_\varepsilon^*(\sigma_\varepsilon).
\end{align*}
The hidden constants are independent of $\varepsilon$. The equivalence for $\varepsilon = [0,\infty]$ shows 
\begin{align}\label{eq:QuasiNorm}
\mathcal{J}^*(\tau) - \mathcal{J}^*(\sigma) \simeq \int_\Omega |V^*(\tau) - V^*(\sigma)|^2\,\mathrm{d}x\qquad\text{for all }\tau \in \Wdivf.
\end{align}
\end{lemma}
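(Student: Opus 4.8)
The plan is to exploit that $\sigma_\varepsilon$ minimizes $\mathcal{J}_\varepsilon^*$ over the \emph{affine} space $\Wdivf$, whose direction space consists of the divergence-free fields. Since $A_\varepsilon^*(P) = (\phi_\varepsilon^*)'(|P|)\,P/|P|$ is the gradient of $P \mapsto \phi_\varepsilon^*(|P|)$ and $\mathcal{J}_\varepsilon^*$ has quadratic growth, the functional is G\^ateaux differentiable on $\WdivTwo$ with $\mathcal{J}_\varepsilon^*(\tau) = \int_\Omega \phi_\varepsilon^*(|\tau|)\dx$, and first-order optimality yields the orthogonality relation
\[
\int_\Omega A_\varepsilon^*(\sigma_\varepsilon)\cdot \xi \dx = 0 \qquad \text{for every divergence-free } \xi \in \WdivTwo.
\]
Every $\tau \in \Wdivf$ with finite energy lies in $\WdivTwof$, so $\tau - \sigma_\varepsilon$ is divergence-free and admissible as a test field; this single identity drives all the inequalities in the chain.

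For the first inequality I would use the pointwise supporting-tangent bound $\phi_\varepsilon^*(|\tau|) - \phi_\varepsilon^*(|\sigma_\varepsilon|) \leq A_\varepsilon^*(\tau)\cdot(\tau - \sigma_\varepsilon)$ coming from convexity of $P \mapsto \phi_\varepsilon^*(|P|)$, integrate over $\Omega$, and subtract the orthogonality relation to replace $A_\varepsilon^*(\tau)$ by the difference $A_\varepsilon^*(\tau) - A_\varepsilon^*(\sigma_\varepsilon)$. The middle equivalence is then purely pointwise: I apply Lemma~\ref{lem:equivalence} with $P = \tau(x)$ and $Q = \sigma_\varepsilon(x)$ and integrate, the hidden constants being independent of $\varepsilon$ exactly as stated there.

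The hard part is the closing lower bound $\int_\Omega |V_\varepsilon^*(\tau) - V_\varepsilon^*(\sigma_\varepsilon)|^2 \dx \lesssim \mathcal{J}_\varepsilon^*(\tau) - \mathcal{J}_\varepsilon^*(\sigma_\varepsilon)$, which closes the loop and thereby upgrades all four quantities to equivalence. Mere convexity only delivers the useless estimate $\mathcal{J}_\varepsilon^*(\tau) - \mathcal{J}_\varepsilon^*(\sigma_\varepsilon) \geq \int_\Omega A_\varepsilon^*(\sigma_\varepsilon)\cdot(\tau - \sigma_\varepsilon)\dx = 0$, so I need the quantitative uniform convexity of $\phi_\varepsilon^*$. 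Concretely I would invoke the Bregman-distance estimate
\[
\phi_\varepsilon^*(|\tau|) - \phi_\varepsilon^*(|\sigma_\varepsilon|) - A_\varepsilon^*(\sigma_\varepsilon)\cdot(\tau - \sigma_\varepsilon) \simeq |V_\varepsilon^*(\tau) - V_\varepsilon^*(\sigma_\varepsilon)|^2,
\]
the same uniform-convexity input underlying Lemma~\ref{lem:equivalence} and available from \cite{DieningEttwein08,DieningFornasierWank17} with $\varepsilon$-independent constants, then integrate it and once more use orthogonality to annihilate the linear term.

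Finally I would specialize to $\varepsilon = [0,\infty]$, where $\phi_{[0,\infty]}^*(t) = t^q/q$ forces $\mathcal{J}_{[0,\infty]}^* = \mathcal{J}^*$ and $\sigma_{[0,\infty]} = \sigma$, so the chain collapses to the asserted equivalence \eqref{eq:QuasiNorm}. I expect the only genuine obstacle to be the $\varepsilon$-robustness of the constant in the lower bound: one must verify that the uniform-convexity estimate does not degenerate as $\varepsilon_- \to 0$ and $\varepsilon_+ \to \infty$, which is precisely why Lemma~\ref{lem:equivalence} is phrased with constants depending only on $q$.
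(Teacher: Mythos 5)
Your proposal is correct and is essentially the argument the paper relies on: the paper's own ``proof'' is only a citation to \cite[Lem.~17]{DieningFornasierWank17}, and the chain you give---first-order optimality of $\sigma_\varepsilon$ over the affine set $\Wdivf$ to obtain $\int_\Omega A^*_\varepsilon(\sigma_\varepsilon)\cdot\xi\,\mathrm{d}x=0$ for divergence-free $\xi$, the convexity supporting-tangent bound for the first inequality, the pointwise equivalence of Lemma~\ref{lem:equivalence} for the middle one, and the Bregman-distance form of uniform convexity (with $q$-dependent, $\varepsilon$-independent constants) for the closing lower bound---is exactly the proof carried out there. Your concluding specialization to $\varepsilon=[0,\infty]$ to obtain \eqref{eq:QuasiNorm} also matches the paper's reading of the lemma, so there is nothing to add.
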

\begin{proof}
This lemma is proven in \cite[Lem.17]{DieningFornasierWank17}.
\end{proof}
The equivalence in \eqref{eq:QuasiNorm} shows that the convergence of the minimizers $\sigma_\varepsilon \to \sigma$ with respect to the natural distance $\int_\Omega |V^*(\sigma) - V^*(\sigma_\varepsilon)|^2\,\mathrm{d}x$ is equivalent to the convergence of the energies $\mathcal{J}^*(\sigma_\varepsilon) \searrow \mathcal{J}^*(\sigma)$.
Therefore, the convergence result in Theorem~\ref{thm:ConvInRelaxPara} implies convergence with respect to the natural distance. 
\section{Ka\v{c}anov iterations}\label{sec: Kacanov}
This section investigates the convergence of the following fixed-point iteration with fixed interval $\varepsilon = [\varepsilon_-,\varepsilon_+]\subset (0,\infty)$. Given $\tau_0 \in \Wdivf$, we calculate iteratively the functions $\tau_{n+1}\in \Wdivf$ and $u_{n+1} \in L^p(\Omega)$ by solving the saddle point problem, for all $\xi \in\Wdiv$ and $v\in L^p(\Omega)$,
\begin{align}\label{eq:SaddlePointKacanov}
\begin{aligned}
\int_\Omega (\varepsilon_- \vee |\tau_n|\wedge \varepsilon_+)^{q-2} \tau_{n+1} \cdot \xi \,\mathrm{d}x + \int_\Omega u_{n+1}\, \textup{div}\, \xi\,\mathrm{d}x & = 0,\\
\int_\Omega v\, \textup{div}\, \tau_{n+1}\,\mathrm{d}x & = -\int_\Omega f v\,\mathrm{d}x.
\end{aligned}
\end{align}
The following theorem shows that the resulting sequence $(\tau_n)_{n\in \mathbb{N}_0}\subset \Wdivf$ converges towards the minimizer $\sigma_\varepsilon \in \Wdivf$ defined in \eqref{eq:regMinimizer}.
\begin{theorem}[Exponential decay]\label{thm:ExpDecay}
There exists a constant $C_q < \infty$ depending solely on $q$ such that the iterates $\tau_n$ from \eqref{eq:SaddlePointKacanov} satisfy, with $\delta = C_q^{-1}(\varepsilon_-/\varepsilon_+)^{2-q}$,
\begin{align}\label{eq:expDecay}
\delta \big(\mathcal{J}^*_\varepsilon (\tau_n) -  \mathcal{J}^*_\varepsilon(\sigma_\varepsilon)\big) \leq   \mathcal{J}_\varepsilon^*(\tau_n) - \mathcal{J}^*(\tau_{n+1})\qquad\text{for all }n\in \mathbb{N}_0.
\end{align}
\end{theorem}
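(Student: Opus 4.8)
The plan is to read the iteration \eqref{eq:SaddlePointKacanov} as one step of an alternating minimization of the jointly convex two-field functional $\mathcal{J}^*(\cdot,\cdot)$ from \eqref{eq:relaxedFunctional}, and then to convert the resulting energy descent into a contraction by comparing two weighted $L^2$ inner products. Throughout I abbreviate $a_n \coloneqq \varepsilon_- \vee |\tau_n| \wedge \varepsilon_+$, so that \eqref{eq:minimalA} gives $\mathcal{J}^*_\varepsilon(\tau_n) = \mathcal{J}^*(\tau_n,a_n)$ and (by Definition~\ref{def:Quantites}) $A^*_\varepsilon(\tau_n) = a_n^{q-2}\tau_n$, and I write $e_n \coloneqq \mathcal{J}^*_\varepsilon(\tau_n) - \mathcal{J}^*_\varepsilon(\sigma_\varepsilon)$ for the energy gap. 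The first observation is that, for the frozen weight $a_n$, the map $\tau \mapsto \mathcal{J}^*(\tau,a_n)$ is purely quadratic up to the $\tau$-independent term $\int_\Omega (1/q - 1/2)\, a_n^q \dx$, and $\tau_{n+1}$ is its constrained minimizer over $\Wdivf$. Since $\tau_n - \tau_{n+1}$ is divergence-free, the exact quadratic (Pythagoras) identity yields
\begin{align*}
\mathcal{J}^*_\varepsilon(\tau_n) - \mathcal{J}^*_\varepsilon(\tau_{n+1}) \geq \mathcal{J}^*(\tau_n,a_n) - \mathcal{J}^*(\tau_{n+1},a_n) = \tfrac12\int_\Omega a_n^{q-2}|\tau_n - \tau_{n+1}|^2\dx,
\end{align*}
where the inequality uses $\mathcal{J}^*_\varepsilon(\tau_{n+1}) \leq \mathcal{J}^*(\tau_{n+1},a_n)$ from \eqref{eq:minimalA}. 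Since $\mathcal{J}^*_\varepsilon \geq \mathcal{J}^*$ pointwise (the relaxation being a minimum over the smaller set of weights $a\in[\varepsilon_-,\varepsilon_+]$), this also bounds the right-hand side $\mathcal{J}^*_\varepsilon(\tau_n) - \mathcal{J}^*(\tau_{n+1})$ of \eqref{eq:expDecay} from below, so it suffices to produce the contraction against $\mathcal{J}^*_\varepsilon(\tau_n) - \mathcal{J}^*_\varepsilon(\tau_{n+1})$.

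Next I would bound the gap $e_n$ from above by the same weighted quantity. Starting from the first inequality of Lemma~\ref{lem:Quasi-Norm} applied to $\tau = \tau_n$, I test the Euler--Lagrange equation of $\sigma_\varepsilon$ (namely $\int_\Omega A^*_\varepsilon(\sigma_\varepsilon)\cdot\xi\dx = 0$ for divergence-free $\xi$) and that of $\tau_{n+1}$ (the first line of \eqref{eq:SaddlePointKacanov}, which for divergence-free $\xi$ reads $\int_\Omega a_n^{q-2}\tau_{n+1}\cdot\xi\dx = 0$) with the divergence-free field $\xi = \tau_n - \sigma_\varepsilon$. This removes the $A^*_\varepsilon(\sigma_\varepsilon)$-term and inserts $\tau_{n+1}$, giving
\begin{align*}
e_n \leq \int_\Omega A^*_\varepsilon(\tau_n)\cdot(\tau_n - \sigma_\varepsilon)\dx = \int_\Omega a_n^{q-2}(\tau_n - \tau_{n+1})\cdot(\tau_n - \sigma_\varepsilon)\dx.
\end{align*}
The Cauchy--Schwarz inequality in the weighted inner product $\langle v,w\rangle_n = \int_\Omega a_n^{q-2}\, v\cdot w \dx$ then splits the right-hand side into the square roots of $\int_\Omega a_n^{q-2}|\tau_n - \tau_{n+1}|^2 \dx \leq 2\big(\mathcal{J}^*_\varepsilon(\tau_n) - \mathcal{J}^*_\varepsilon(\tau_{n+1})\big)$ (the first step) and of $\int_\Omega a_n^{q-2}|\tau_n - \sigma_\varepsilon|^2\dx$.

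The crux, and the step I expect to be the main obstacle, is the last factor: its weight $a_n^{q-2}$ is built from $|\tau_n|$ alone, whereas the natural distance supplied by Lemma~\ref{lem:equivalence} carries the weight $\tilde a^{q-2}$ with $\tilde a = \varepsilon_- \vee (|\tau_n| \vee |\sigma_\varepsilon|) \wedge \varepsilon_+$. Both weights lie in $[\varepsilon_+^{q-2},\varepsilon_-^{q-2}]$ because $q-2<0$, so their ratio is uniformly controlled by $a_n^{q-2}/\tilde a^{q-2} \leq (\varepsilon_-/\varepsilon_+)^{q-2} = (\varepsilon_+/\varepsilon_-)^{2-q}$. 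Hence Lemmas~\ref{lem:equivalence} and~\ref{lem:Quasi-Norm} give $\int_\Omega a_n^{q-2}|\tau_n - \sigma_\varepsilon|^2\dx \lesssim (\varepsilon_+/\varepsilon_-)^{2-q}\int_\Omega |V^*_\varepsilon(\tau_n) - V^*_\varepsilon(\sigma_\varepsilon)|^2\dx \lesssim (\varepsilon_+/\varepsilon_-)^{2-q}\, e_n$ with constants depending only on $q$. Inserting the two bounds into the weighted Cauchy--Schwarz estimate yields $e_n \lesssim (\varepsilon_+/\varepsilon_-)^{(2-q)/2}\big(\mathcal{J}^*_\varepsilon(\tau_n) - \mathcal{J}^*_\varepsilon(\tau_{n+1})\big)^{1/2} e_n^{1/2}$; dividing by $e_n^{1/2}$ (the case $e_n = 0$ being trivial) and squaring gives $e_n \leq C_q (\varepsilon_+/\varepsilon_-)^{2-q}\big(\mathcal{J}^*_\varepsilon(\tau_n) - \mathcal{J}^*_\varepsilon(\tau_{n+1})\big)$, which rearranges to \eqref{eq:expDecay} with $\delta = C_q^{-1}(\varepsilon_-/\varepsilon_+)^{2-q}$. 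The degradation factor $(\varepsilon_-/\varepsilon_+)^{2-q}$ in $\delta$ is exactly the price of this weight mismatch and quantifies how the contraction weakens as the relaxation interval $\varepsilon$ widens.
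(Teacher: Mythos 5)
Your proposal is correct and follows essentially the same route as the paper's proof: both start from the first inequality of Lemma~\ref{lem:Quasi-Norm}, use the Euler--Lagrange equations of $\sigma_\varepsilon$ and of the iteration to replace $A^*_\varepsilon(\tau_n)-A^*_\varepsilon(\sigma_\varepsilon)$ by the weighted increment $a_n^{q-2}(\tau_n-\tau_{n+1})$, identify the weighted norm of the increment with the energy descent via the minimality of $a\mapsto\mathcal{J}^*(\cdot,a)$, and absorb the remaining term through the weight comparison $(\varepsilon_+/\varepsilon_-)^{2-q}$ combined with Lemmas~\ref{lem:equivalence}--\ref{lem:Quasi-Norm}. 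The only cosmetic difference is that you use weighted Cauchy--Schwarz where the paper uses Young's inequality with a free parameter $\gamma$ optimized at the end; these yield the same constant.
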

Before we prove this theorem, let us emphasize its consequences.
\begin{corollary}[Convergence]
Recall the constant $\delta$ from Theorem \ref{thm:ExpDecay}. It holds
\begin{align*}
\mathcal{J}^*_\varepsilon(\tau_{n+1}) - \mathcal{J}^*_\varepsilon(\sigma_\varepsilon) \leq (1-\delta)^n \big( \mathcal{J}_\varepsilon^*(\tau_0) - \mathcal{J}^*_\varepsilon(\sigma_\varepsilon)\big)\qquad\text{for all }n\in\mathbb{N}.
\end{align*}
\end{corollary}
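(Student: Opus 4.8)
The plan is to reduce the corollary to a single elementary recursion that is already essentially contained in Theorem~\ref{thm:ExpDecay}. First I would abbreviate the energy gap by
\begin{align*}
e_n \coloneqq \mathcal{J}^*_\varepsilon(\tau_n) - \mathcal{J}^*_\varepsilon(\sigma_\varepsilon),
\end{align*}
which is nonnegative because $\sigma_\varepsilon = \argmin_{\tau\in\Wdivf}\mathcal{J}^*_\varepsilon(\tau)$ and each iterate $\tau_n$ lies in $\Wdivf$. The right-hand side of the exponential-decay estimate \eqref{eq:expDecay} is, read with the relaxed energy at $\tau_{n+1}$, a telescoping difference $\mathcal{J}^*_\varepsilon(\tau_n) - \mathcal{J}^*_\varepsilon(\tau_{n+1}) = e_n - e_{n+1}$. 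Substituting this into \eqref{eq:expDecay} turns the theorem into $\delta\, e_n \leq e_n - e_{n+1}$, and rearranging gives the contraction
\begin{align*}
e_{n+1} \leq (1-\delta)\, e_n \qquad\text{for all }n\in\mathbb{N}_0.
\end{align*}

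Before iterating this contraction I would verify that $1-\delta \in [0,1)$, so that taking powers is harmless and the bound genuinely improves with $n$. Since $1<q<2$ the exponent $2-q$ is positive, and $\varepsilon_- \leq \varepsilon_+$ forces $(\varepsilon_-/\varepsilon_+)^{2-q}\leq 1$; combined with $C_q \geq 1$ this yields $0 < \delta \leq 1$, hence $0 \leq 1-\delta < 1$. If $C_q \geq 1$ is not already guaranteed by the proof of Theorem~\ref{thm:ExpDecay}, one may harmlessly replace $C_q$ by $\max\{C_q,1\}$ without changing any statement.

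It then remains to run the standard geometric-decay induction on the contraction: from $e_{m+1}\leq (1-\delta)\,e_m$ one obtains $e_m \leq (1-\delta)^m e_0$ for every $m\in\mathbb{N}_0$, and specializing to $m=n+1$ together with $(1-\delta)^{n+1}\leq (1-\delta)^n$ produces the claimed bound $e_{n+1}\leq (1-\delta)^n e_0$. I do not expect any genuine obstacle here, since all the analytic content is packaged inside Theorem~\ref{thm:ExpDecay}; the only points deserving a word of justification are the telescoping identity for the right-hand side of \eqref{eq:expDecay} and the elementary sign check $\delta\in(0,1]$, after which the conclusion is immediate.
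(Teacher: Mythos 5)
Your proposal is correct and follows essentially the same route as the paper: the paper's own proof consists of exactly the telescoping decomposition $\mathcal{J}^*_\varepsilon(\tau_{n+1}) - \mathcal{J}^*_\varepsilon(\sigma_\varepsilon) = \big(\mathcal{J}^*_\varepsilon(\tau_n) - \mathcal{J}^*_\varepsilon(\sigma_\varepsilon)\big) - \big(\mathcal{J}^*_\varepsilon(\tau_n) - \mathcal{J}^*_\varepsilon(\tau_{n+1})\big)$ followed by an application of \eqref{eq:expDecay} and iteration of the resulting contraction. Your additional remarks (nonnegativity of the gap, the sign check $\delta\in(0,1]$, and the observation that the iteration actually yields the slightly stronger exponent $n+1$) are harmless elaborations the paper leaves implicit.
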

\begin{proof}
Due to \eqref{eq:expDecay} we have for all $n\in \mathbb{N}$
\begin{align*}
\mathcal{J}^*_\varepsilon(\tau_{n+1}) - \mathcal{J}^*_\varepsilon(\sigma_\varepsilon) & = \big(\mathcal{J}^*_\varepsilon(\tau_n) - \mathcal{J}^*_\varepsilon(\sigma_\varepsilon)\big) - \big(\mathcal{J}^*_\varepsilon(\tau_{n}) - \mathcal{J}^*_\varepsilon(\tau_{n+1})\big)\\
& \leq (1-\delta) \big( \mathcal{J}^*_\varepsilon(\tau_{n}) - \mathcal{J}^*_\varepsilon(\sigma_\varepsilon)\big). 
\end{align*}
In particular, the energy error reduces in each iteration by the factor $(1-\delta)$, leading to the corollary.
\end{proof}
\begin{proof}[Proof of Theorem \ref{thm:ExpDecay}]
Recall the mapping $A^*_\varepsilon$ from Definition \ref{def:Quantites}. Lemma \ref{lem:Quasi-Norm} and the definition of $\tau_n$ in \eqref{eq:SaddlePointKacanov} imply for all positive values $\gamma>0$ that
\begin{align}\label{eq:addends}
\begin{aligned}
&\mathcal{J}^*_\varepsilon(\tau_n) - \mathcal{J}^*_\varepsilon(\sigma_\varepsilon) \leq \int_\Omega \big(A^*_\varepsilon(\tau_n) - A^*_\varepsilon(\sigma_\varepsilon)\big) \cdot (\tau_n-\sigma_\varepsilon)\,\mathrm{d}x\\
&\qquad  = \int_\Omega \frac{(\varphi_\varepsilon^*)'(|\tau_n|)}{|\tau_n|} (\tau_n - \tau_{n+1})\cdot (\tau_n - \sigma_\varepsilon)\,\mathrm{d}x\\
&\qquad \leq \frac{1}{2\gamma} \int_\Omega \frac{(\phi_\varepsilon^*)'(|\tau_n|)}{|\tau_n|} |\tau_n-\tau_{n+1}|^2\,\mathrm{d}x + \frac{\gamma}{2} \int_\Omega \frac{(\phi_\varepsilon^*)'(|\tau_n|)}{|\tau_n|} |\tau_n-\sigma_\varepsilon|^2\,\mathrm{d}x.
\end{aligned}
\end{align}
We define $\mathcal{J}_\varepsilon^*(\tau,a)\coloneqq \mathcal{J}^*(\tau,\varepsilon_-\vee a \wedge \varepsilon_+)$ with the relaxed functional $\mathcal{J}^*(\bigcdot,\bigcdot)$ from~\eqref{eq:relaxedFunctional}. Then \eqref{eq:SaddlePointKacanov} (combined with $\textup{div}\, (\tau_n - \tau_{n+1}) = 0$) and \eqref{eq:minimalA} show for the first addend in \eqref{eq:addends} that
\begin{align*}
&\frac{1}{2\gamma} \int_\Omega \frac{(\phi_\varepsilon^*)'(|\tau_n|)}{|\tau_n|} |\tau_n-\tau_{n+1}|^2\,\mathrm{d}x  \\
&\quad =\frac{1}{2\gamma} \int_\Omega \frac{(\phi_\varepsilon^*)'(|\tau_n|)}{|\tau_n|} \tau_n\cdot \tau_n\,\mathrm{d}x - \frac{1}{2\gamma} \int_\Omega \frac{(\phi_\varepsilon^*)'(|\tau_n|)}{|\tau_n|} \tau_n\cdot  \tau_{n+1} \,\mathrm{d}x \\
&\quad =\frac{1}{2\gamma} \int_\Omega \frac{(\phi_\varepsilon^*)'(|\tau_n|)}{|\tau_n|} |\tau_n|^2\,\mathrm{d}x - \frac{1}{2\gamma} \int_\Omega \frac{(\phi_\varepsilon^*)'(|\tau_n|)}{|\tau_n|} |\tau_{n+1}|^2 \,\mathrm{d}x \\
%+ \frac{1}{2\gamma} \int_\Omega \frac{(\phi_\varepsilon^*)'(|\tau_n|)}{|\tau_n|} \tau_{n+1}\cdot  (\tau_{n} -\tau_{n+1}) \,\mathrm{d}x\\
&\quad = \gamma^{-1}\big(\mathcal{J}^*_\varepsilon(\tau_n,|\tau_n|) - \mathcal{J}^*_\varepsilon(\tau_{n+1},|\tau_n|)\big)\leq \gamma^{-1}\big(\mathcal{J}^*_\varepsilon(\tau_n,|\tau_n|) - \mathcal{J}^*_\varepsilon(\tau_{n+1},|\tau_{n+1}|)\big)\\
&\quad = \gamma^{-1}\big(\mathcal{J}_\varepsilon^*(\tau_n) - \mathcal{J}_\varepsilon^*(\tau_{n+1})\big).
\end{align*}
Since $\varepsilon_+^{q-2} \leq (\phi_\varepsilon^*)'(t)/t = (\varepsilon_-\vee t \wedge \varepsilon_+)^{q-2} \leq \varepsilon_-^{q-2}$, it holds 
\begin{align*}
\frac{(\phi_\varepsilon^*)'(t)}{t} \leq \left(\frac{\varepsilon_+}{\varepsilon_-}\right)^{2-q} \frac{(\phi_\varepsilon^*)'(s)}{s}\qquad\text{for all }t,s>0.
\end{align*}
This inequality and Lemma \ref{lem:equivalence}--\ref{lem:Quasi-Norm} show that the second addend in \eqref{eq:addends} satisfies, with some constant $C_q<\infty$ that depends solely on $q$,  
\begin{align*}
\frac{\gamma}{2} \int_\Omega \frac{(\phi_\varepsilon^*)'(|\tau_n|)}{|\tau_n|} |\tau_n-\sigma_\varepsilon|^2\,\mathrm{d}x & \leq \frac{\gamma}{2}\left(\frac{\varepsilon_+}{\varepsilon_-}\right)^{2-q} \int_\Omega \frac{(\phi_\varepsilon^*)'(|\tau_n| \vee |\sigma_\varepsilon|)}{|\tau_n|\vee |\sigma_\varepsilon|} |\tau_n - \sigma_\varepsilon|^2\,\mathrm{d}x\\
&\leq \gamma C_q \left(\frac{\varepsilon_+}{\varepsilon_-}\right)^{2-q} \big(\mathcal{J}_\varepsilon^*(\tau_n) - \mathcal{J}_\varepsilon^*(\sigma_\varepsilon)\big).
\end{align*}
The estimate for the two addends in \eqref{eq:addends} yield for all $\gamma >0$ that
\begin{align*}
\gamma(1-\gamma C_q(\varepsilon_+/\varepsilon_-)^{2-q})\big(\mathcal{J}_\varepsilon^*(\tau_n) - \mathcal{J}_\varepsilon^*(\sigma_\varepsilon)\big) \leq \mathcal{J}_\varepsilon(\tau_n) - \mathcal{J}_\varepsilon(\tau_{n+1}).
\end{align*}
This inequality and $\max_{\gamma>0} \gamma(1-C_q\gamma(\varepsilon_+/\varepsilon_-)^{2-q}) = (4C_q)^{-1}(\varepsilon_-/\varepsilon_+)^{2-q}$ conclude the proof.
\end{proof}
\section{Algebraic rate}\label{sec:AlgebraicRate}
Since the rate of convergence of the \Kacanov\ scheme depends strongly on the choice of the interval $\varepsilon$, we have to balance the increase of the intervals $\varepsilon$ and the decreasing speed of convergence carefully. This section shows that it is possible to get some algebraic rate of convergence.

Let $\alpha,\beta>0$ with $\alpha + \beta \leq (2-q)^{-1}$ and define the increasing sequence of intervals $\varepsilon_n = (\varepsilon_{n,-},\varepsilon_{n,+}) = ((n+1)^{-\alpha},(n+1)^\beta)$ for all $n\in \mathbb{N}$. Let $\sigma_{n+1}\in \Wdivf$ denote the solution to \eqref{eq:SaddlePointKacanov} with interval $\varepsilon \coloneqq \varepsilon_n$. Theorem \ref{thm:ExpDecay} shows that the iterates $\sigma_n$ satisfy the decay estimate, for all $n\in \mathbb{N}$ and $\delta_n \coloneqq C^{-1}(\varepsilon_{n,-}/\varepsilon_{n,+})^{2-q}$,
\begin{align}\label{eq:InequAlgebrRate1}
\delta_n\, \big(\mathcal{J}^*_{\varepsilon_n}(\sigma_n) - \mathcal{J}^*_{\varepsilon_n}(\sigma_{\varepsilon_n})\big) \leq \mathcal{J}^*_{\varepsilon_n}(\sigma_{n+1}) - \mathcal{J}^*_{\varepsilon_n}(\sigma_n).
\end{align} 
Since $\mathcal{J}_{\varepsilon_{n+1}}^* \leq \mathcal{J}_{\varepsilon_n}^*$, the inequality in \eqref{eq:InequAlgebrRate1} yields for all $n\in\mathbb{N}$ that
\begin{align}\label{eq:Est1}
\begin{aligned}
&\mathcal{J}_{\varepsilon_{n+1}}^* (\sigma_{n+1})- \mathcal{J}^*(\sigma)\leq \mathcal{J}_{\varepsilon_{n}}^* (\sigma_{n+1}) - \mathcal{J}^*(\sigma)\\
&\qquad = \big(\mathcal{J}_{\varepsilon_{n}}^* (\sigma_{n}) - \mathcal{J}^*(\sigma)\big) - \big(\mathcal{J}_{\varepsilon_{n}}^* (\sigma_{n}) - \mathcal{J}_{\varepsilon_n}^* (\sigma_{n+1})\big) \\
&\qquad\leq \big(\mathcal{J}_{\varepsilon_{n}}^* (\sigma_{n}) - \mathcal{J}^*(\sigma)\big) - \delta_n\, \big(\mathcal{J}_{\varepsilon_{n}}^* (\sigma_{n}) - \mathcal{J}_{\varepsilon_n}^* (\sigma_{\varepsilon_n})\big)\\
&\qquad  =(1-\delta_n)\,\big(\mathcal{J}_{\varepsilon_{n}}^* (\sigma_{n}) - \mathcal{J}^*(\sigma)\big) + \delta_n\, \big(\mathcal{J}_{\varepsilon_n}^* (\sigma_{\varepsilon_n})-\mathcal{J}^*(\sigma)\big).
\end{aligned}
\end{align}  
The combination of the estimates in \eqref{eq:Est1} and Theorem~\ref{thm:ConvInRelaxPara} results with some fixed constant $c_R < \infty$ and $r\geq 2$ such that $\lVert \sigma \rVert_{L^r(\Omega)} <\infty$ in 
\begin{align*}
\mathcal{J}_{\varepsilon_{n+1}}^*(\sigma_{n+1}) - \mathcal{J}^*(\sigma)\leq (1-\delta_n) \big(\mathcal{J}_{\varepsilon_{n}}^* (\sigma_{n}) - \mathcal{J}^*(\sigma)\big) + \delta_n c_R \,(\varepsilon_-^q + \varepsilon_+^{-(r-q)}).
\end{align*}
The last term is very small for large $n$. However, the factor $(1-\delta_n) \to 1$ as $n \to \infty$. Nevertheless, the slow convergence of $\delta_n\to 0$ allows for an algebraic convergence of the product $\Pi_{i=1}^n(1-\delta_n)$ towards zero. This observation is key in the proof of the algebraic rate of convergence~\cite[Sec.\ 5]{DieningFornasierWank17}. The proof extends without modifications to the \Kacanov\ scheme in this paper and leads to the following statement.
\begin{theorem}[Algebraic rate]\label{thm:AlgebraicRate}
Suppose the minimizer $\sigma$ in \eqref{eq:dualProblemIntro} satisfies $\lVert \sigma \rVert_{L^r(\Omega)} < \infty$ for some $r>2$. Then there exist constants $0<c$ and $C<\infty$ that depends on $\alpha,\beta,r,q, |\Omega|, \lVert \sigma \rVert_{L^r(\Omega)}$, and the initial error $\mathcal{J}^*_{\varepsilon_0}(\sigma_0)- \mathcal{J}^*(\sigma)$ such that 
\begin{align*}
\mathcal{J}^*_{\varepsilon_n}(\sigma_n) - \mathcal{J}^*(\sigma) \leq C n^{-1/c}.
\end{align*}
\end{theorem}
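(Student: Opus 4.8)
The plan is to reduce the statement to the analysis of a single scalar recursion. Abbreviating $e_n \coloneqq \mathcal{J}^*_{\varepsilon_n}(\sigma_n) - \mathcal{J}^*(\sigma) \ge 0$, the estimate derived immediately before the theorem (which already incorporates Theorem~\ref{thm:ConvInRelaxPara}) reads
\begin{align*}
e_{n+1} \le (1-\delta_n)\, e_n + \delta_n\, c_R\big(\varepsilon_{n,-}^q + \varepsilon_{n,+}^{-(r-q)}\big).
\end{align*}
First I would substitute the explicit intervals $\varepsilon_{n,-} = (n+1)^{-\alpha}$ and $\varepsilon_{n,+} = (n+1)^{\beta}$ so that every quantity becomes a power of $n+1$. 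This turns the contraction factor into $\delta_n = C^{-1}(n+1)^{-\mu}$ with $\mu \coloneqq (\alpha+\beta)(2-q)$ and $C$ the constant from Theorem~\ref{thm:ExpDecay}, and bounds the source term by $b\,(n+1)^{-\mu-\rho}$, where $\rho \coloneqq \min\{\alpha q, \beta(r-q)\} > 0$ and the constant $b$ is built from $c_R$ and $C$. The decisive consequence of the hypothesis $\alpha+\beta \le (2-q)^{-1}$ is that $\mu \le 1$, so that $\sum_n \delta_n = \infty$.

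The core of the proof is then to show, by induction on $n$, a bound of the shape $e_n \le M\,(n+1)^{-\gamma}$ for a suitable exponent $\gamma \in (0,\rho]$ and a constant $M$. For the inductive step I would insert the hypothesis into the recursion and combine it with the elementary convexity estimate $(n+2)^{-\gamma} \ge (n+1)^{-\gamma} - \gamma\,(n+1)^{-\gamma-1}$. Closing the step then reduces to verifying, for all sufficiently large $n$, the inequality
\begin{align*}
b\,(n+1)^{-\mu-\rho} \le M\,(n+1)^{-\gamma}\Big(C^{-1}(n+1)^{-\mu} - \gamma\,(n+1)^{-1}\Big).
\end{align*}

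Here the case distinction on $\mu$ enters. If $\mu < 1$, the bracket is, for large $n$, comparable to $C^{-1}(n+1)^{-\mu}$ because $(n+1)^{-\mu}$ dominates $(n+1)^{-1}$, and the step closes as soon as $\gamma \le \rho$; the natural rate is then $\gamma = \rho$. If $\mu = 1$, the bracket equals $(C^{-1}-\gamma)(n+1)^{-1}$, which is positive only for $\gamma < C^{-1}$, so one must choose $\gamma$ strictly below $\min\{\rho, C^{-1}\}$ and the attainable exponent degrades and now depends on $q$ through $C = C_q$. In either case I fix such a $\gamma > 0$, and enlarge $M$ to absorb the finitely many small indices where the asymptotic comparison is not yet valid and to cover the base case through $e_0 = \mathcal{J}^*_{\varepsilon_0}(\sigma_0) - \mathcal{J}^*(\sigma)$. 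This is precisely what makes $M$, and hence the constant $C$ appearing in the statement, depend on $\alpha, \beta, r, q, |\Omega|, \lVert \sigma \rVert_{L^r(\Omega)}$ and the initial error. Setting $c \coloneqq 1/\gamma$ and using $(n+1)^{-\gamma} \le n^{-\gamma}$ yields the claimed bound $\mathcal{J}^*_{\varepsilon_n}(\sigma_n) - \mathcal{J}^*(\sigma) \le C\, n^{-1/c}$.

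I expect the borderline regime $\mu = 1$, i.e.\ $\alpha+\beta = (2-q)^{-1}$, to be the main obstacle: there the contraction factors satisfy $1-\delta_n \to 1$, so no geometric decay is available, and one must exploit the precise logarithmic divergence of $\sum_n \delta_n$, equivalently the merely algebraic decay $\prod_{k=0}^{n-1}(1-\delta_k) \simeq n^{-1/C}$, to still force polynomial decay of $e_n$. This is exactly the mechanism isolated in \cite[Sec.~5]{DieningFornasierWank17}, and since the recursion above has the identical structure, that argument carries over without change.
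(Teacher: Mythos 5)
Your proposal is correct and follows essentially the same route as the paper: the paper's argument consists precisely of the scalar recursion $e_{n+1}\le(1-\delta_n)e_n+\delta_n c_R(\varepsilon_{n,-}^q+\varepsilon_{n,+}^{-(r-q)})$ displayed before the theorem, together with the observation that $\sum_n\delta_n=\infty$ forces algebraic decay, the remaining bookkeeping being delegated to Section~5 of \cite{DieningFornasierWank17}. Your explicit induction with the power-law ansatz $e_n\le M(n+1)^{-\gamma}$, including the case distinction at $\mu=(\alpha+\beta)(2-q)=1$, is a correct self-contained version of exactly that delegated step.
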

\section{Numerical realization}\label{sec:NumerRealization}
In this section we introduce a discretization and suggest an adaptive scheme to improve the convergence of the iterative method.
\subsection{Discretization}\label{sec:Discretization}
Conforming discretizations of the regularized linear dual problem in \eqref{eq:SaddlePointKacanov} lead to saddle point formulations which are more challenging to solve than (symmetric positive definite) primal formulations. 
One possible remedy is the use of duality relations to reformulate dual formulations with lowest-order Raviart-Thomas elements as primal problems using Crouzeix-Raviart elements (see for example~\cite{Marini85,CarstensenLiu15,Bartels21,BartelsKaltenbach22}).
An alternative is the use of a non-conforming discretization of the dual problem. We set the lowest-order Lagrange finite element space
\begin{align*}
S^1_{0}(\tria) \coloneqq \lbrace v_h \in W^{1,q}_0(\Omega) \colon v_h|_T \in \mathbb{P}_1(T)\text{ for all }T\in \tria\rbrace.
\end{align*}
The non-conforming discretization of the dual problem results as follows.
Let $u_h \in S^1_0(\tria)$ denote the minimizer of the energy in \eqref{eq:PrimalProblemIntro} over the discrete subspace $S^1_0(\tria)$.
Due to the Euler-Lagrange equations the minimizer is characterized as the unique solution to the problem 
\begin{align}\label{eq:PrimalDiscrete}
\int_\Omega |\nabla u_h|^{p-2} \nabla u_h \cdot \nabla v_h \dx = \int_\Omega f v_h\dx\qquad\text{for all }v_h \in S^1_{0}(\tria).
\end{align}
We set the discrete divergence $\textup{div}_h\colon \mathbb{P}_0(\tria;\mathbb{R}^d) \to S_0^1(\tria)^*$ such that
\begin{align*}
  \skp{\textup{div}_h \tau_h}{v_h}_{S^1_0(\tria)^*, S^1_0(\tria)} \coloneqq - \int_\Omega \nabla v_h\cdot \tau_h\dx \quad\text{for all }\tau_h\in \mathbb{P}_0(\tria;\mathbb{R}^d)\text{ and }v_h \in S_0^1(\tria).
\end{align*}
We interpret $\textup{div}_h \tau_h \in S_0^1(\tria)^*$ by its Riesz representative $\textup{div}_h \tau_h \in S_0^1(\tria)$, i.e.
\begin{align*}
  \int_\Omega v_h \, \textup{div}_h \tau_h \dx = \skp{\textup{div}_h \tau_h}{v_h}_{S^1_0(\tria)^*, S^1_0(\tria)}  \quad\text{for all }\tau_h\in \mathbb{P}_0(\tria;\mathbb{R}^d)\text{ and }v_h \in S_0^1(\tria).
\end{align*}
% We set the discrete divergence $\textup{div}_h\colon \mathbb{P}_0(\tria;\mathbb{R}^d) \to S_0^1(\tria)^*$ such that
% \begin{align*}
% \int_\Omega v_h \, \textup{div}_h \tau_h \dx \coloneqq - \int_\Omega \nabla v_h\cdot \tau_h\dx \quad\text{for all }\tau_h\in \mathbb{P}_0(\tria;\mathbb{R}^d)\text{ and }v_h \in S_0^1(\tria).
% \end{align*}
\begin{lemma}[Dual formulation]\label{lem:dualForm}
  Let $u_h\in S_0^1(\tria)$ solve \eqref{eq:PrimalDiscrete} and define the piece-wise constant function $\sigma_h \coloneqq |\nabla u_h|^{p-2} \nabla u_h\in \mathbb{P}_0(\tria;\mathbb{R}^d)$.  Then we have
  \begin{align*}
    \begin{aligned}
      \int_\Omega |\sigma_h|^{q-2}\sigma_h \cdot \tau_h \dx + \int_\Omega u_h \,\textup{div}_h \tau_h \dx & = 0&&\text{for all }\tau_h \in \mathbb{P}_0(\tria;\mathbb{R}^d),
      \\
      \int_\Omega v_h \,\textup{div}_h \sigma_h \dx & = -\int_\Omega f v_h\dx&&\text{for all }v_h \in S^1_{0}(\tria).
    \end{aligned}
  \end{align*}
\end{lemma}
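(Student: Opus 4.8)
The plan is to verify the two stated equations directly from the definition $\sigma_h \coloneqq |\nabla u_h|^{p-2}\nabla u_h$ and the characterization of $u_h$ in \eqref{eq:PrimalDiscrete}. The second equation is essentially a restatement of the definition of the discrete divergence. Indeed, for any $v_h \in S^1_0(\tria)$, the definition of $\textup{div}_h$ gives $\int_\Omega v_h\,\textup{div}_h\sigma_h\dx = -\int_\Omega \nabla v_h\cdot\sigma_h\dx = -\int_\Omega |\nabla u_h|^{p-2}\nabla u_h\cdot\nabla v_h\dx$, and by \eqref{eq:PrimalDiscrete} this equals $-\int_\Omega f v_h\dx$, which is exactly the claimed identity. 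So the second equation follows immediately and requires no further work.

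For the first equation, the key observation is the pointwise duality relation between $\sigma_h$ and $\nabla u_h$ on each element. First I would check that $|\sigma_h|^{q-2}\sigma_h = \nabla u_h$. This is the algebraic inverse of the map $\nabla u_h\mapsto |\nabla u_h|^{p-2}\nabla u_h$: writing $\sigma_h = |\nabla u_h|^{p-2}\nabla u_h$, one has $|\sigma_h| = |\nabla u_h|^{p-1}$, and since $1/p+1/q=1$ gives $(p-1)(q-1)=1$, a short computation yields $|\sigma_h|^{q-2}\sigma_h = |\nabla u_h|^{(p-1)(q-2)}|\nabla u_h|^{p-2}\nabla u_h = |\nabla u_h|^{(p-1)(q-2)+(p-2)}\nabla u_h$, and the exponent simplifies to zero. (This is precisely the relation $\nabla u = |\sigma|^{q-2}\sigma$ already recorded for the continuous problem after \eqref{eq:dualProbVar}.) Note this step uses crucially that $\nabla u_h$ is elementwise constant, so the nonlinear pointwise relation holds without any averaging subtlety.

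Having established $|\sigma_h|^{q-2}\sigma_h = \nabla u_h$, I would substitute into the left-hand side of the first equation: for arbitrary $\tau_h\in\mathbb{P}_0(\tria;\mathbb{R}^d)$,
\begin{align*}
\int_\Omega |\sigma_h|^{q-2}\sigma_h\cdot\tau_h\dx + \int_\Omega u_h\,\textup{div}_h\tau_h\dx = \int_\Omega \nabla u_h\cdot\tau_h\dx - \int_\Omega \nabla u_h\cdot\tau_h\dx = 0,
\end{align*}
where the second integral was rewritten using the definition of $\textup{div}_h$ (with the roles of test function and field interchanged, i.e.\ $\int_\Omega u_h\,\textup{div}_h\tau_h\dx = -\int_\Omega\nabla u_h\cdot\tau_h\dx$, valid since $u_h\in S^1_0(\tria)$). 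The two terms cancel, giving the claim.

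I do not expect any genuine obstacle here; the lemma is a clean bookkeeping exercise once the pointwise duality relation is in hand. The only point demanding minor care is the exponent arithmetic confirming $|\sigma_h|^{q-2}\sigma_h = \nabla u_h$, where one must correctly use $(p-1)(q-1)=1$; and the mild subtlety that the definition of $\textup{div}_h$ pairs a piecewise-constant field against $\nabla v_h$, so one must ensure the test objects live in the correct spaces ($\tau_h$ piecewise constant, $u_h\in S^1_0(\tria)$) for both applications of that definition to be legitimate.
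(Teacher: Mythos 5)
Your proposal is correct and follows essentially the same route as the paper: the second equation is read off from the definition of $\textup{div}_h$ together with \eqref{eq:PrimalDiscrete}, and the first equation is reduced to the pointwise identity $|\sigma_h|^{q-2}\sigma_h=\nabla u_h$, verified by the same exponent arithmetic based on $1/p+1/q=1$. The only cosmetic difference is that you argue forward from the definition of $\sigma_h$ while the paper phrases the algebra as a chain of equivalences.
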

The lemma follows directly from abstract duality theory. A more direct proof reads as follows.
\begin{proof}[Proof of Lemma~\ref{lem:dualForm}]
  Let the functions $u_h$ and $\sigma_h$ be as in the lemma. Using the definition of the discrete divergence shows that the function $\sigma_h$ fulfills the second equation in the lemma. The first equation is equivalent to 
  \begin{align}\label{eq:ProofequaiTemp}
    |\sigma_h|^{q-2}\sigma_h = \nabla u_h.
  \end{align}
  This yields $|\sigma_h|^{q-1} = |\nabla u_h|$, which is due to the identity $1/p+1/q = 1$ equivalent to $|\sigma_h|^{q-2} = |\nabla u_h|^{2-p}$. Hence, \eqref{eq:ProofequaiTemp} is equivalent to $\sigma_h = |\nabla u_h|^{p-2} \nabla u_h$.
\end{proof}
The mixed problem in Lemma~\ref{lem:dualForm} corresponds to the minimization of $\mathcal{J}^*$ over
\begin{align*}
&\mathbb{P}_0(\textup{div}_h{=}{\shortminus}f,\tria) \coloneqq \lbrace \tau_h \in \mathbb{P}_0(\tria,\mathbb{R}^d) \colon \textup{div}_h \tau_h = - f\text{ in } S_0^1(\tria)^*\rbrace.
\end{align*}
More precisely, we have
\begin{align}\label{eq:discDivMinProb}
|\nabla u_h|^{p-2} \nabla u_h = \sigma_h = \min_{\tau_h \in \mathbb{P}_0(\textup{div}_h{=}{\shortminus}f,\tria)} \mathcal{J}^*(\tau_h).
\end{align}
The discrete version of the \Kacanov{} scheme in \eqref{eq:SaddlePointKacanov} corresponding to the minimization problem in \eqref{eq:discDivMinProb} reads as follows. 
Given an interval $\varepsilon \subset (0,\infty)$ and some initial data $\sigma_{h,0} \in \mathbb{P}_0(\tria;\mathbb{R}^d)$, we compute iteratively for all $n\in \mathbb{N}$ the solution $\sigma_{h,n} \in \mathbb{P}_0(\tria;\mathbb{R}^d)$ and $u_{h,n} \in S_0^1(\tria)$ with, for all $\xi \in \mathbb{P}_0(\tria;\mathbb{R}^d)$ and $v_h\in S_0^1(\tria)$,  
\begin{align}\label{eq:discreteDualProblem}
\begin{aligned}
\int_\Omega (\varepsilon_- \vee |\sigma_{h,n}|\wedge \varepsilon_+)^{q-2} \sigma_{h,n+1} \cdot \xi_h \,\mathrm{d}x -  \int_\Omega u_{h,n+1}\, \textup{div}_h\, \xi_h\,\mathrm{d}x & = 0,\\
\int_\Omega v_h\, \textup{div}_h\, \sigma_{h,n+1}\,\mathrm{d}x & = -\int_\Omega f v_h\,\mathrm{d}x.
\end{aligned}
\end{align}
\begin{lemma}[Primal formulation]
Given $\sigma_{h,n} \in \mathbb{P}_0(\tria;\mathbb{R}^d)$, let $u_{h,{n+1}}\in S^1_0(\tria)$ be the unique solution to
\begin{align}\label{eq:PrimalKacanov}
\int_\Omega (\varepsilon_- \vee |\sigma_{h,n}|\wedge \varepsilon_+)^{2-q} \nabla u_{h,n+1}\cdot \nabla v_h\dx = \int_\Omega f v_h\dx\quad\text{for all }v_h \in S_0^1(\tria).
\end{align}
Then $\sigma_{h,n+1} \coloneqq (\varepsilon_- \vee |\sigma_{h,n}|\wedge \varepsilon_+)^{2-q} \nabla u_{h,n+1}$ and $u_{h,n+1}$ solve \eqref{eq:discreteDualProblem}.
\end{lemma}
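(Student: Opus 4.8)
The plan is to verify the two equations of \eqref{eq:discreteDualProblem} by direct substitution, exploiting that the weight is piecewise constant and strictly positive. Abbreviate $w \coloneqq \varepsilon_- \vee |\sigma_{h,n}| \wedge \varepsilon_+$. Since $\sigma_{h,n} \in \mathbb{P}_0(\tria;\mathbb{R}^d)$ is piecewise constant, so is $w$, and the bounds $0 < \varepsilon_- \leq w \leq \varepsilon_+ < \infty$ together with $1 < q < 2$ ensure that $w^{2-q}$ and $w^{q-2}$ are bounded, strictly positive, piecewise constant functions. In particular the form $(u_h, v_h) \mapsto \int_\Omega w^{2-q}\nabla u_h\cdot \nabla v_h\dx$ is bounded and coercive on $S^1_0(\tria)$, so that \eqref{eq:PrimalKacanov} admits a unique solution $u_{h,n+1}$ by the Lax--Milgram lemma; this justifies speaking of ``the unique solution''.

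First I would check admissibility of the candidate $\sigma_{h,n+1} \coloneqq w^{2-q}\nabla u_{h,n+1}$: because $u_{h,n+1}$ is piecewise affine its gradient is piecewise constant, and multiplying by the piecewise constant $w^{2-q}$ keeps $\sigma_{h,n+1}$ in $\mathbb{P}_0(\tria;\mathbb{R}^d)$, so it is a legitimate trial function for the mixed problem.

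Next I would establish the first equation of \eqref{eq:discreteDualProblem}. The pointwise cancellation $w^{q-2}\sigma_{h,n+1} = w^{q-2}w^{2-q}\nabla u_{h,n+1} = \nabla u_{h,n+1}$ gives $\int_\Omega w^{q-2}\sigma_{h,n+1}\cdot \xi_h\dx = \int_\Omega \nabla u_{h,n+1}\cdot \xi_h\dx$ for every $\xi_h\in\mathbb{P}_0(\tria;\mathbb{R}^d)$. Since $u_{h,n+1}\in S^1_0(\tria)$, the definition of the discrete divergence yields $\int_\Omega u_{h,n+1}\,\textup{div}_h\xi_h\dx = -\int_\Omega \nabla u_{h,n+1}\cdot \xi_h\dx$, and combining the two identities reduces the first equation to an identity. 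The second equation is handled analogously: for $v_h\in S^1_0(\tria)$ the definition of $\textup{div}_h$ gives $\int_\Omega v_h\,\textup{div}_h\sigma_{h,n+1}\dx = -\int_\Omega w^{2-q}\nabla u_{h,n+1}\cdot \nabla v_h\dx$, and the primal relation \eqref{eq:PrimalKacanov} identifies the last integral with $\int_\Omega f v_h\dx$, so the left-hand side equals $-\int_\Omega f v_h\dx$ as required.

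The computation is elementary and contains no real obstacle; the only points demanding care are bookkeeping ones, namely tracking the sign in the definition of $\textup{div}_h$ and justifying the exponent cancellation $w^{q-2}w^{2-q}=1$ pointwise, which is exactly where the strict positivity $\varepsilon_->0$ of the regularized weight enters. No variational, compactness, or regularity argument beyond the solvability of \eqref{eq:PrimalKacanov} is needed.
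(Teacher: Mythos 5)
Your proof is correct and is exactly the argument the paper intends: the paper's own proof just says ``similar calculations as in the proof of Lemma~\ref{lem:dualForm}'', i.e.\ the same direct verification via the definition of $\textup{div}_h$ and the pointwise cancellation $w^{q-2}w^{2-q}=1$ that you carry out, together with the Lax--Milgram well-posedness remark. One bookkeeping caveat: the first equation of \eqref{eq:discreteDualProblem} is printed with a minus sign in front of $\int_\Omega u_{h,n+1}\,\textup{div}_h\,\xi_h\dx$, which is inconsistent with \eqref{eq:SaddlePointKacanov} and Lemma~\ref{lem:dualForm} and is evidently a typo; what you verify (and what the lemma must mean) is the plus-sign version.
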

\begin{proof}
Similar calculations as in the proof of Lemma~\ref{lem:dualForm} yield the result.
\end{proof}
A review of the proofs in Section~\ref{sec:conInRelaxPara}--\ref{sec:AlgebraicRate} shows that most results remain valid without any changes in the proofs if we replace  $\Wdivf$ by $\mathbb{P}_0(\textup{div}_h{=}{\shortminus}f,\tria)$.
The only exception are the maximal regularity results in Theorem~\ref{thm:maxRegularity} which, to our knowledge, do not exist for discrete approximations of the $p$-Laplacian and motivates further research.

\subsection{Adaptive scheme}\label{sec:adaptScheme}
To improve the algebraic rate of convergence from Theorem \ref{thm:AlgebraicRate}, we suggest an adaptive scheme that takes into account the errors caused by the discretization, the regularization, and the \Kacanov{} iteration. Even so there is some progress in the numerical analysis of such schemes (see for example \cite{CongreveWihler17,DiPietroVohralikYousef15,GantnerHaberlPraetoriusStiftner18}), a rigorous analysis of the convergence of the proposed scheme is beyond the scope of this paper.
We indicate beneficial properties of this adaptive approach by numerical experiments in Section~\ref{sec:NumExp}. 

We set the functions  $\varphi_\varepsilon'(t) \coloneqq ((\phi_\varepsilon^*)')^{-1}(t)$ and $\kappa_\varepsilon(t) \coloneqq (\kappa_\varepsilon^*)^*(t) \coloneqq \sup \lbrace r\geq 0 \colon rt - \kappa^*_\varepsilon(r)\rbrace$ for all $t\geq 0$. The latter contribution equals
\begin{align*}
\begin{aligned}
\kappa_\varepsilon(t)& = \int_0^t \varphi_\varepsilon'(s)\,\mathrm{d}s  - \kappa^*_\varepsilon(0)  = 
\begin{cases}
\frac{1}{2}\varepsilon_-^{2-q}t^2 - \left(\frac{1}{q}-\frac{1}{2}\right) \varepsilon_-^q&\text{for }t^p\leq \varepsilon_-^q,\\
\frac{1}{p}t^p &\text{for }\varepsilon_-^q \leq t^p \leq \varepsilon_+^q,\\
\frac{1}{2}\varepsilon_+^{2-q}t^2  - \left(\frac{1}{q}-\frac{1}{2}\right) \varepsilon_+^q&\text{for }\varepsilon_+^q\leq t^p. 
\end{cases}
\end{aligned}
\end{align*}
It defines the primal energy 
\begin{align*}
\mathcal{J}_\varepsilon(v_h) \coloneqq \int_\Omega \kappa_\varepsilon(|\nabla v_h |)\dx - \int_\Omega fv_h\dx\qquad\text{for all }v_h\in S_0^1(\tria).
\end{align*}
Let the minimizer of the regularized primal and dual energy read
\begin{align*}
u_{h,\varepsilon} = \argmin_{v_h \in S_0^1(\tria)} \mathcal{J}_\varepsilon(v_h)\qquad\text{and}\qquad \sigma_{h,\varepsilon} \in \argmin_{\tau_h \in\mathbb{P}_0(\textup{div}_h{=}{\shortminus}f,\tria)} \mathcal{J}_\varepsilon^*(\tau_h).
\end{align*}
An application of duality theory shows that 
\begin{align}\label{eq:discDualityGap}
-  \mathcal{J}^*_\varepsilon(\sigma_{h,\varepsilon}) = \mathcal{J}_\varepsilon(u_{h,\varepsilon}).
\end{align}
Moreover, the (discrete) minimizers  are related by the identities
\begin{align*}
\frac{\varphi_\varepsilon'(|\nabla u_{h,\varepsilon}|)}{|\nabla u_{h,\varepsilon}|} \nabla u_{h,\varepsilon} = \sigma_{h,\varepsilon}\qquad\text{and}\qquad \frac{(\varphi^*_\varepsilon)'(|\sigma_{h,\varepsilon}|)}{|\sigma_{h,\varepsilon}|} \sigma_{h,\varepsilon} = \nabla u_{h,\varepsilon}.
\end{align*}
As in~\cite{DieningFornasierWank17,DieningEttwein08} we define for all $s,t\geq 0$ the shifted N-functions
\begin{align*}
\varphi_{\varepsilon,t}(s) \coloneqq \int_0^s \frac{\varphi_\varepsilon'(t \vee \tau)}{t \vee \tau}\tau  \,\mathrm{d}\tau\qquad\text{and}\qquad
\varphi_{\varepsilon,t}^*(s) \coloneqq \int_0^s \frac{(\phi_\varepsilon^*)'(t \vee \tau)}{t \vee \tau}\tau  \,\mathrm{d}\tau.
\end{align*}
We further introduce the quantity 
\begin{align*}
V_\varepsilon(P) \coloneqq \begin{cases}
\sqrt{\frac{\phi_\varepsilon'(|P|)}{|P|}}P&\text{for }P\neq 0,\\
0&\text{for }P=0.
\end{cases}
\end{align*}
Let $\sigma_{h,n} \in \mathbb{P}_0(\textup{div}_h{=}{\shortminus}f,\tria)$ and $u_{h,n}\in S_0^1(\tria)$ be computed with the iterative scheme in \eqref{eq:PrimalKacanov} and let $\rho>0$ be some fixed weight.
The error indicator that indicates errors caused by
\begin{enumerate}
\item the upper interval bound $\varepsilon_+$ reads
$\eta_{\varepsilon^+}^2(\sigma_{h,n}) \coloneqq \mathcal{J}^*_{\varepsilon}(\sigma_{h,n}) - \mathcal{J}^*_{(\varepsilon_-,\infty)}(\sigma_{h,n})$,\label{item:1}
\item the lower interval bound $\varepsilon_-$ reads
$
\eta_{\varepsilon^-}^2(\sigma_{h,n}) \coloneqq \mathcal{J}^*_{\varepsilon}(\sigma_{h,n}) - \mathcal{J}^*_{(0,\varepsilon_+)}(\sigma_{h,n})$,\label{item:2}
\item the error due to the fixed-point iteration reads \label{item:3}
\begin{align*}
%\eta^2_\textup{Ka\v{c}}(\sigma_{h,n}) \coloneqq \int_\Omega (\varphi_{\varepsilon,|\sigma_{h,n-1}|})^*\left( \frac{\varphi_\varepsilon'(|\sigma_{n-1}|)}{|\sigma_{n-1}|} |\sigma_{n} - \sigma_{n-1}|\right) \,\mathrm{d}x,
\eta^2_{\textup{Ka\v{c}},\varepsilon}(\sigma_{h,n}) \coloneqq \mathcal{J}_\varepsilon(u_{h,n}) + \mathcal{J}^*_\varepsilon(\sigma_{h,n}),
\end{align*}
\item the discretization reads $\eta_{h,\varepsilon}^2(u_n) \coloneqq \rho\, \sum_{T\in \mathcal{T}} \eta^2_{h,\varepsilon}(u_n,T)$ with ($\mathcal{F}(T)$ denotes the set of all faces of $T\in \tria$ and $h_\gamma$ denotes the diameter of $\gamma \in \mathcal{F}(T)$) \label{item:4}
\begin{align*}
\qquad\qquad\eta^2_{h,\varepsilon}(u_{h,n},T) & \coloneqq \int_T (\varphi_{\varepsilon, |\nabla u_{h,n}|})^* (h_T |f|) + \sum_{\gamma \in \mathcal{F}(T)} h_\gamma \int_\gamma|\llbracket V_\varepsilon(\nabla u_{h,n}) \rrbracket_\gamma|^2\,\mathrm{d}s.
\end{align*}
\end{enumerate}
The error indicators in \ref{item:1}--\ref{item:2} are motivated by the definition of the relaxation, in \ref{item:3} by the identity in \eqref{eq:discDualityGap}, and in \ref{item:4} by the a posteriori error control for the primal formulation of the $p$-Laplace problem \cite{BelenkiDieningKreuzer12,DieningKreuzer08}.
%The error indicators in \ref{item:1}--\ref{item:3} are motivated by the analysis in \cite{Wank17}.
%
The error indicators lead to the following numerical scheme.

\begin{figure}[H]
\begin{algorithm}[H]
\SetAlgoLined
\TitleOfAlgo{Adaptive relaxed $p$-Ka{\v c}anov{} algorithm}
\KwData{Given: $f \in L^q(\Omega)$, $\sigma_0 \in
  \Sigma_h$;}
\KwResult{Approximations of the solutions to \eqref{eq:PrimalProblemIntro} and \eqref{eq:dualProblemIntro};}
Initialize: $\varepsilon_1=[1,1] \subset (0,\infty)$,
$n=0$, $\sigma_{h,0} = 0$\;

\While{desired accuracy is not achieved yet}{
  Increase $n$ by 1 and calculate $u_{h,n}$ and $\sigma_{h,n}$ by means of \eqref{eq:PrimalKacanov}\;
  Calculate error indicators
  $\eta^2_{\varepsilon^+}(\sigma_{h,n})$, $\eta^2_{\varepsilon^-}(\sigma_{h,n})$,
  $\eta^2_{h,\varepsilon}(u_{h,n})$, $\eta^2_{\text{Ka{\v c}},\varepsilon}(\sigma_{h,n})$\;
\uIf{$\eta^2_{\varepsilon^+}(\sigma_{h,n})$ is the largest}{
Set $\varepsilon_{n+1,+}
  \coloneqq 1.25 \cdot \varepsilon_{n,+}$ and $\varepsilon_{n+1,-}
  \coloneqq \varepsilon_{n,-}$\;
}
\uElseIf{$\eta^2_{\varepsilon^-}(\sigma_{h,n})$ is the largest}{
Set $\varepsilon_{n+1,-}
  \coloneqq 0.8 \cdot \varepsilon_{n,-}$ and $\varepsilon_{n+1,+}
  \coloneqq \varepsilon_{n,+}$\;
}
\uElseIf{$\eta^2_h(u_{h,n})$ is the largest}
{
Refine mesh with D\"orfler marking and set $\varepsilon_{n+1} \coloneqq \varepsilon_n$\;}
\uElseIf{$\eta^2_{\textup{Ka\v{c}},\varepsilon}(\sigma_{h,n})$ is the largest}{
Set $\varepsilon_{n+1} \coloneqq \varepsilon_n$\;
}}
\end{algorithm}
\caption{Adaptive relaxed $p$-Ka{\v c}anov{} algorithm}\label{fig:Algo}
\end{figure}
\section{Numerical experiments}\label{sec:NumExp}
We implemented the numerical scheme suggested in Section~\ref{sec:Discretization} with the open source tool for solving partial differential equations FEniCS \cite{LoggMardalWells12}. The supplementary material of this paper contains the implementation.
\subsection{Convergence for fixed mesh}\label{subsec:Exp1}
Our first experiment emphasizes the advantages of the adaptive relaxed $p$-Ka{\v c}anov{} algorithm in Figure~\ref{fig:Algo} compared to the strategy suggested in Section~\ref{sec:AlgebraicRate}. To allow for a better comparison we do not refine the mesh $\tria$, which is a partition of the unit circle with about $10^5$ vertices.  Our right-hand side $f=1$ is constant and $p=10$. The initial relaxation interval reads $\varepsilon = [1,1]$.  In a first computation we use the strategy suggested in Section~\ref{sec:AlgebraicRate} with $\alpha = \beta = (2-q)^{-1}/2$. Figure~\ref{fig:Exp1} displays the resulting algebraic rate of convergence of the energies $\mathcal{J}_\varepsilon(u_{h,n})$ and $\mathcal{J}_\varepsilon^*(\sigma_{h,n})$ of the iterates towards the energy $\mathcal{J}_\varepsilon(u_{h})$ of (a very accurate approximation of) the exact discrete minimizer. The relative error of the approximated minimal energy and the exact minimal energy is about $3\times 10^{-5}$. In a second computation we adapt the relaxation interval according adaptive relaxed $p$-Ka{\v c}anov{} algorithm in Figure~\ref{fig:Algo} (without mesh refinements). This strategy increases the rate of convergence significantly.  The first 22 iterations cause the decrease of $\varepsilon_-$.  Thereafter, it takes more and more \Kacanov{} iterations to close the dual gap, which leads to the zick-zack pattern in Figure~\ref{fig:Exp1}.  This behavior is in agreement with the theory, which states that speed of convergence of the \Kacanov{} iterations depends on the relaxation interval.
\begin{figure}
{\centering
\begin{tikzpicture}
\begin{axis}[
clip=false,
width=.6\textwidth,
height=.45\textwidth,
ymode = log,
cycle multi list={\nextlist MyColors},
scale = {1},
xlabel={Iterations},
clip = true,
legend cell align=left,
legend style={legend columns=1,legend pos= outer north east,font=\fontsize{7}{5}\selectfont}
]
	\addplot table [x=iter,y=ErrorPrimal] {Data/Experiment1.txt};
	\addplot table [x=iter,y=errorDual] {Data/Experiment1.txt};
	
	\addplot table [x=iter,y=ErrorPrimal] {Data/Experiment1_adapt.txt};
	\addplot table [x=iter,y=errorDual] {Data/Experiment1_adapt.txt};
	\legend{
	{$\mathcal{J}_\varepsilon(u_{h,n}) - \mathcal{J}(u_h)$ },{$\mathcal{J}^*_\varepsilon(\sigma_{h,n}) - \mathcal{J}^*(\sigma_h)$ },{$\mathcal{J}_\varepsilon(u_{h,n}) - \mathcal{J}(u_h)$ (adaptive)},{$\mathcal{J}^*_\varepsilon(\sigma_{h,n}) - \mathcal{J}^*(\sigma_h)$ (adaptive)}};
\end{axis}
\end{tikzpicture}
\caption{Convergence of the energies in Experiment 1 towards the energy of the exact discrete minimizer.} \label{fig:Exp1}}
\end{figure}
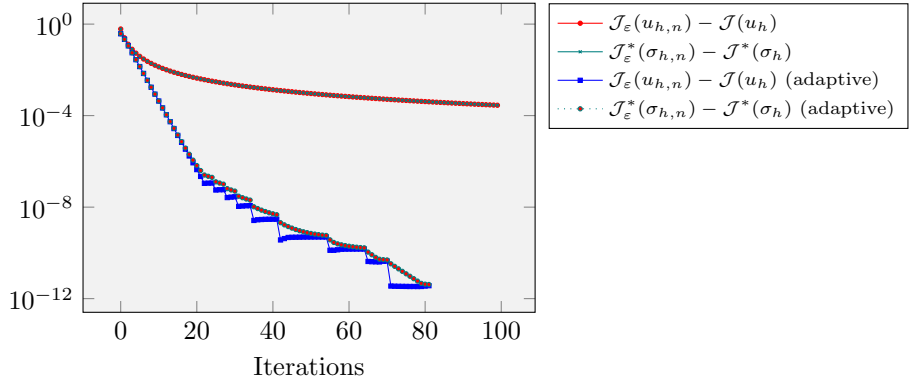
\subsection{Fully adaptive scheme}\label{subsec:Exp2}
In our second experiment we illustrate the performance of the adaptive scheme by solving the $p$-Laplacian with right-hand side $f=2$ and large exponent $p=100$ on the L-shaped domain $\Omega = [-1,1]^2\setminus [0,1)^2$. We utilize the adaptive refinement strategy from Section~\ref{sec:adaptScheme} with weight $\rho = 10^{-3}$. 
Figure~\ref{fig:Exp2} displays the convergence history plot. 
The left-hand side displays the errors against the accumulated number of degrees of freedom, that is, we sum up all degrees of freedom in each iteration of the adaptive loop. The right-hand side plots the errors before each mesh refinement against the degrees of freedom of the current iterate.

In both cases the rate of convergence seems to be slightly worse than $-2/3$.
This rate might be caused by the (pre-asymptotic) effect that the adaptive scheme causes line refinements, see Figure~\ref{fig:AdaptMesh}.
Moreover, it is unclear if the scheme leads to an optimal rate since the proofs of optimal convergence in \cite{BelenkiDieningKreuzer12,DieningKreuzer08} use the exact discrete solution of the non-linear and not regularized problem.

Moreover, we observe huge gaps $\mathcal{J}_\varepsilon(u_{h,n}) + \mathcal{J}_\varepsilon^*(\sigma_{h,n})$ after refinements of the mesh $\tria$ or enlargements of the relaxation interval $\varepsilon$, see~ Figure~\ref{fig:Exp2}. The large error in the primal energies indicates that these gaps are mainly caused by the fact that $u_{h,n}$ is a bad approximation of the exact discrete minimizer of the primal problem. Thus, it might be advantageous to use an error estimator that estimates the dual error. Alternatively, one might use an error estimator that is efficient and reliable for any discrete approximation.

%The large differences $u_{h,n}$ is far away from the exact discrete minimizer. We reduced this effect by use the very small weight $\rho = 10^{-3}$ in the definition of $\eta_{h,\varepsilon}$. 
%For larger weights $\rho$ the adaptive scheme experiences some difficulties in the sense that the error indicator $\eta_{h,\varepsilon}$ dominates the error contributions and causes a sequence of mesh refinements without any \Kacanov{} steps or adjustments of the relaxation intervals in between.

%In fact, the error estimator $\eta_{h,\varepsilon}$ seems to perform bad if $u_{h,n}$ is far away from the exact discrete minimizer of the regularized energy in the sense that it overestimates the error. This can be seen in 

% and the Fichera corner domain $\Omega = [-1,1]^3\setminus [0,1)^3$. 
%J_star = 1.3339617698892503

\begin{figure}
{\centering
\begin{tikzpicture}
\begin{axis}[
clip=false,
width=.54\textwidth,
height=.45\textwidth,
ymode = log,
xmode = log,
cycle multi list={\nextlist MyColors},
scale = {1},
xlabel={Accumulated ndofs},
clip = true,
legend cell align=left,
legend style={legend columns=1,legend pos= outer north east,font=\fontsize{7}{5}\selectfont}
]
	\addplot table [x=ndofSum,y=eta_h] {Data/Experiment3_d2_New.txt};\label{line:1b}
	\addplot table [x=ndofSum,y=J_eps_dist] {Data/Experiment3_d2_New.txt};\label{line:2b}
	\addplot table [x=ndofSum,y=J_eps_star_dist] {Data/Experiment3_d2_New.txt};\label{line:3b}
	\addplot[dashed, sharp plot,update limits=false] coordinates {(1e5,5e-4) (1e8,5e-6)};
\end{axis}
\end{tikzpicture}
\begin{tikzpicture}
\begin{axis}[
clip=false,
width=.54\textwidth,
height=.45\textwidth,
ymode = log,
xmode = log,
cycle multi list={\nextlist MyColors},
scale = {1},
xlabel={ndofs},
clip = true,
legend cell align=left,
legend style={legend columns=1,legend pos= outer north east,font=\fontsize{7}{5}\selectfont}
]
	\addplot table [x=ndof,y=eta_h] {Data/Exp3_refs_d2.txt};
	\addplot table [x=ndof,y=J_eps_dist] {Data/Exp3_refs_d2.txt};
	\addplot table [x=ndof,y=J_eps_star_dist] {Data/Exp3_refs_d2.txt};
	\addplot[dashed, sharp plot,update limits=false] coordinates {(1e3,1e-3) (1e6,1e-5)};
%	\legend{
%	{$\eta_h^2(u_{h,n})$},{$|\mathcal{J}_\varepsilon(u_{h,n}) - \mathcal{J}(u)|$},{$|\mathcal{J}^*_\varepsilon(\sigma_{h,n}) - \mathcal{J}^*(\sigma)|$}};
\end{axis}
\end{tikzpicture}
\caption{Convergence history of $\eta_h^2(u_{h,n})$ (\ref{line:1b}), $|\mathcal{J}_{\varepsilon_n}(u_{h,n}) - \mathcal{J}(u)|$ (\ref{line:2b}), $|\mathcal{J}_{\varepsilon_n}^*(\sigma_{h,n}) - \mathcal{J}^*(\sigma)|$ (\ref{line:3b}). The dashed line indicates the slope $-2/3$.} \label{fig:Exp2}}
\end{figure}
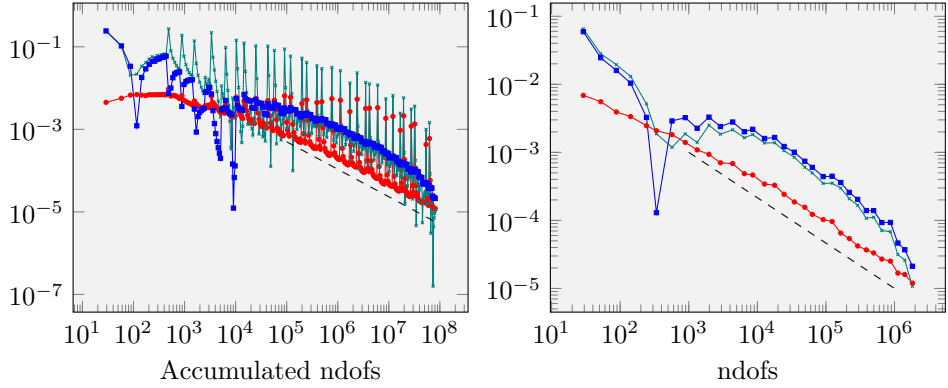

\begin{figure}
\includegraphics[scale=.2]{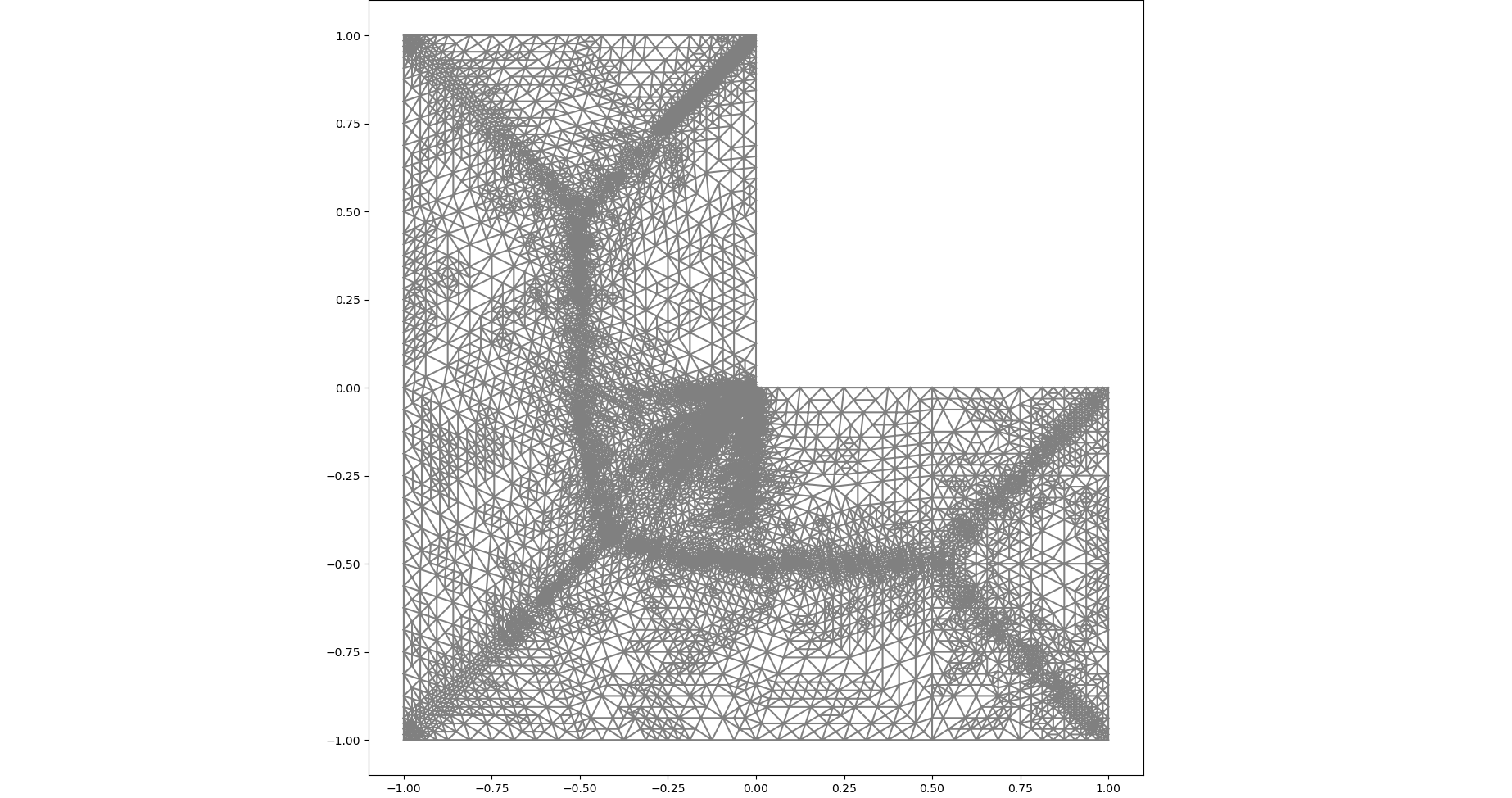}
\caption{Adaptively refined mesh in Experiment 2 with about $10^4$ vertices.}\label{fig:AdaptMesh}
\end{figure}
\subsection{Comparison to steepest descent}\label{subsec:Exp3}
This experiment compares our scheme with the steepest descent approach suggested in \cite{HuangLiLiu07}.
Given an approximation $u_{h,n}^\textup{steep} \in S_0^1(\tria)$ and a small regularization parameter $0<\delta \ll 1$, this scheme computes a descend direction $d_{h,n} \in S_0^1(\tria)$ with
\begin{align}\label{eq:SteepestDescent}
\begin{aligned}
&\int_\Omega (\delta + |\nabla u_{h,n}|)^{p-2}\nabla d_{h,n} \cdot v_h\dx \\
&\qquad = - \int_\Omega   |\nabla u_{h,n}|^{p-2}\nabla u^\textup{steep}_{h,n} \cdot \nabla v_h \dx + \int_\Omega fv_h\dx\quad\text{for all }v_h \in S_0^1(\tria).
\end{aligned}
\end{align} 
Thereafter, it performs a line search to determine
\begin{align}\label{eq:lineSearch}
u^\textup{steep}_{h,n+1} \coloneqq u_{h,n}^\textup{steep} + \alpha_n d_{h,n} \qquad \text{with}\qquad
\alpha_n \coloneqq \argmin_{\alpha \geq 0} \mathcal{J}(u_{h,n}^\textup{steep} + \alpha d_{h,n}). 
\end{align}
Notice that the line search is an additional costly effort. We compare the regularized \Kacanov{} scheme and the steepest descent approach for different values $p \in \lbrace 5,10,20,50\rbrace$. The right-hand side is the constant function $f=2$.
The underlying domain $\Omega = (-1,1)^2\setminus [0,1)$ is the L-shaped domain and we use meshes $\tria$ with about $10^3$ and $10^5$ vertices.
The meshes result from an adaptive mesh refinement strategy similar to the one in Experiment~2. Our initial iterate on these meshes is the Galerkin approximation $u_{h,0} = u^\textup{steep}_{h,0} \in S_0^1(\tria)$ to the Poisson model problem (that is $p=2$). We fix our relaxation interval $\varepsilon = (10^{-6},10^6)$ and the relaxation parameter $\delta = 10^{-6}$.
Notice that we do in general not recommend such large relaxation intervals, since it slows down the rate of convergence (see Experiment 1--2). However, we do not want to benefit in our comparison from improved rates due to our relaxation/regularization. 
We compute the iterates of the \Kacanov{} and steepest descent schemes until the duality gap of the \Kacanov{} scheme is below
\begin{align*}
\mathcal{J}_\varepsilon(u_{h,n}) + \mathcal{J}_\varepsilon^*(\sigma_{h,n}) \leq 10^{-7}.
\end{align*}
Thereafter, we proceed with the \Kacanov{} scheme until the duality gap is below $10^{-9}$ to compute a reference value for the exact minimizer $\mathcal{J}(u_{h})$.

Figure~\ref{fig:Experiment3} displays the resulting convergence history plots. It shows that the energy differences $\mathcal{J}^*(\sigma_{h,n}) + \mathcal{J}(u_h)$ in the dual energies decrease monotonically. After a very fast decay in a pre-asymptotic regime, we see the expected exponential rate of convergence. The rate slows down as $p$ increases but seems to be robust with respect to mesh refinements. 
The primal energy differences $\mathcal{J}(u_{h,n}) - \mathcal{J}(u_h)$ show some oscillatory behavior in a pre-asymptotic regime. After that regime (which seems to be larger than the pre-asymptotic regime for the dual energy differences) we see an exponential convergence with a rate that is slightly better than the rate of the dual energy errors. 
The steepest descent approach seems to converge on coarse meshes asymptotically faster than the \Kacanov{} scheme. However, the size of the pre-asymptotic regime depends not only on $p$ (as the size of the pre-asymptotic regimes for the \Kacanov{} scheme does as well) but also on the mesh. In fact, our calculations never overcome these regimes for the meshes with about $10^5$ degrees of freedom. Moreover, for $p=50$ and the fine mesh $\tria$ the steepest descent direction $d_{h,n} \in S_0^1(\tria)$ does not seem to be a proper descent direction, that is, our line search in \eqref{eq:lineSearch} does not find a minimizer $\alpha_n > 0$.
This indicates difficulties for large values of $p$ caused by numerical instabilities and the regularization $\delta$.

Overall, the experiment shows that the (regularized) \Kacanov{} iterations outperform the steepest descent scheme. The \Kacanov{} scheme
\begin{itemize}
\item is much faster since it avoids an involved line search,
\item shows significantly better pre-asymptotic behavior, in particular for large values of $p$,
\item is robust with respect to (adaptive) mesh refinements.
\end{itemize}
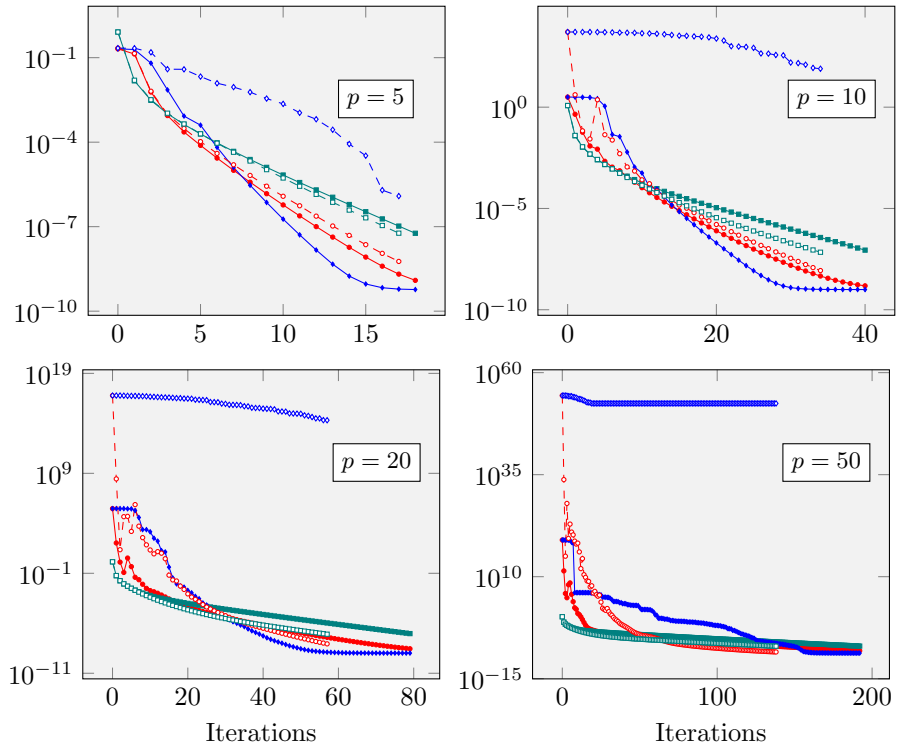
\begin{figure}
\begin{tikzpicture}
\begin{axis}[
clip=false,
width=.5\textwidth,
height=.45\textwidth,
ymode = log,
cycle multi list={\nextlist MyColors3},
scale = {1},
clip = true,
]
	\addplot table [x=iter,y=EnergyDifKac] {Data/DataExp3/Experiment4_p_is_5ndof1128.txt};
	\addplot table [x=iter,y=EnergyDifKacDual] {Data/DataExp3/Experiment4_p_is_5ndof1128.txt};
	\addplot table [x=iter,y=EnergyDifSteep] {Data/DataExp3/Experiment4_p_is_5ndof1128.txt};

	\addplot table [x=iter,y=EnergyDifKac] {Data/DataExp3/Experiment4_p_is_5ndof122107.txt};
	\addplot table [x=iter,y=EnergyDifKacDual] {Data/DataExp3/Experiment4_p_is_5ndof122107.txt};
	\addplot table [x=iter,y=EnergyDifSteep] {Data/DataExp3/Experiment4_p_is_5ndof122107.txt};		
			\node[draw,fill=white, anchor=west] at (rel axis cs: .7,.7) {\small $p = 5$};
			\end{axis}
\end{tikzpicture}
\begin{tikzpicture}
\begin{axis}[
clip=false,
width=.5\textwidth,
height=.45\textwidth,
ymode = log,
cycle multi list={\nextlist MyColors3},
scale = {1},
clip = true,
]
	\addplot table [x=iter,y=EnergyDifKac] {Data/DataExp3/Experiment4_p_is_10ndof1360.txt};
	\addplot table [x=iter,y=EnergyDifKacDual] {Data/DataExp3/Experiment4_p_is_10ndof1360.txt};
	\addplot table [x=iter,y=EnergyDifSteep] {Data/DataExp3/Experiment4_p_is_10ndof1360.txt};
	\addplot table [x=iter,y=EnergyDifKac] {Data/DataExp3/Experiment4_p_is_10ndof110621.txt};
	\addplot table [x=iter,y=EnergyDifKacDual] {Data/DataExp3/Experiment4_p_is_10ndof110621.txt};
	\addplot table [x=iter,y=EnergyDifSteep] {Data/DataExp3/Experiment4_p_is_10ndof110621.txt};
			\node[draw,fill=white, anchor=west] at (rel axis cs: .7,.7) {\small $p = 10$};
			\end{axis}
\end{tikzpicture}
\begin{tikzpicture}
\begin{axis}[
clip=false,
width=.5\textwidth,
height=.45\textwidth,
ymode = log,
cycle multi list={\nextlist MyColors3},
scale = {1},
xlabel={Iterations},
clip = true,
]
	\addplot table [x=iter,y=EnergyDifKac] {Data/DataExp3/Experiment4_p_is_20ndof1227.txt};
	\addplot table [x=iter,y=EnergyDifKacDual] {Data/DataExp3/Experiment4_p_is_20ndof1227.txt};
	\addplot table [x=iter,y=EnergyDifSteep] {Data/DataExp3/Experiment4_p_is_20ndof1227.txt};
	\addplot table [x=iter,y=EnergyDifKac] {Data/DataExp3/Experiment4_p_is_20ndof129614.txt};
	\addplot table [x=iter,y=EnergyDifKacDual] {Data/DataExp3/Experiment4_p_is_20ndof129614.txt};
	\addplot table [x=iter,y=EnergyDifSteep] {Data/DataExp3/Experiment4_p_is_20ndof129614.txt};		
				\node[draw,fill=white, anchor=west] at (rel axis cs: .7,.7) {\small $p = 20$};
\end{axis}
\end{tikzpicture}
\begin{tikzpicture}
\begin{axis}[
clip=false,
width=.5\textwidth,
height=.45\textwidth,
ymode = log,
cycle multi list={\nextlist MyColors3},
scale = {1},
xlabel={Iterations},
clip = true,
legend cell align=left,
legend style={legend columns=3,font=\fontsize{7}{5}\selectfont}
]
	\addplot table [x=iter,y=EnergyDifKac] {Data/DataExp3/Experiment4_p_is_50ndof1184.txt};\label{line:1}
	\addplot table [x=iter,y=EnergyDifKacDual] {Data/DataExp3/Experiment4_p_is_50ndof1184.txt};\label{line:2}
	\addplot table [x=iter,y=EnergyDifSteep] {Data/DataExp3/Experiment4_p_is_50ndof1184.txt};\label{line:3}

	\addplot table [x=iter,y=EnergyDifKac] {Data/DataExp3/Experiment4_p_is_50ndof119757.txt};
	\addplot table [x=iter,y=EnergyDifKacDual] {Data/DataExp3/Experiment4_p_is_50ndof119757.txt};
	\addplot table [x=iter,y=EnergyDifSteep] {Data/DataExp3/Experiment4_p_is_50ndof119757.txt};	
			\node[draw,fill=white, anchor=west] at (rel axis cs: .7,.7) {\small $p = 50$};

\end{axis}
\end{tikzpicture}
\caption{Convergence of $\mathcal{J}(u_{h,n}) - \mathcal{J}(u_h)$ (\ref{line:1}), $\mathcal{J}^*(\sigma_{h,n}) + \mathcal{J}(u_h)$ (\ref{line:2}), and $\mathcal{J}(u^\textup{steep}_{h,n}) - \mathcal{J}(u_h)$ (\ref{line:3}) in Experiment 3 for various $p$. The lines with filled markers display the results for meshes with about $10^3$ vertices and the lines with markers filled white displays the results for meshes with about $10^5$ vertices.}\label{fig:Experiment3}
\end{figure}
\printbibliography

%\bibliographystyle{plain}
%\bibliography{Topics_in_nonlin_PDEs}

\end{document}